\documentclass[12pt]{article}
\usepackage{stmaryrd}
\usepackage{bbm}
\usepackage{amsfonts}
\usepackage{amsmath}
\usepackage{amssymb}
\usepackage{amsthm}
\usepackage{mathrsfs}
\usepackage{accents}
\usepackage{graphicx}
\usepackage[left=1in,top=1in,right=1in]{geometry}

\numberwithin{equation}{section}

\newtheorem{thm}{Theorem}[section]
\newtheorem{lem}{Lemma}[section]
\newtheorem{rem}{Remark}[section]
\newtheorem{defi}{Definition}[section]
\newtheorem{prop}[thm]{Proposition}
\newtheorem{cor}[thm]{Corollary}

\newtheorem{question}[thm]{Question}

\begin{document}

\title{Superlinearity of geodesic length in 2$D$ critical first-passage percolation}
\author{Michael Damron \thanks{The research of M. D. is supported by NSF grant DMS-0901534 and an NSF CAREER award.} \\ \small{Georgia Tech}  \and Pengfei Tang\\ \small{Indiana University, Bloomington}}

\maketitle

\begin{abstract}
First-passage percolation is the study of the metric space $(\mathbb{Z}^d,T)$, where $T$ is a random metric defined as the weighted graph metric using random edge-weights $(t_e)_{e\in \mathcal{E}^d}$ assigned to the nearest-neighbor edges $\mathcal{E}^d$ of the $d$-dimensional cubic lattice. We study the so-called critical case in two dimensions, in which $\mathbb{P}(t_e=0)=p_c$, where $p_c$ is the threshold for two-dimensional bond percolation. In contrast to the standard case $(<p_c)$, the distance $T(0,x)$ in the critical case grows sub linearly in $x$ and geodesics are expected to have Euclidean length which is superlinear. We show a strong version of this super linearity, namely that there is $s>1$ such that with probability at least $1-e^{-\|x\|_1^c}$, the minimal length geodesic from $0$ to $x$ has at least $\|x\|_1^s$ number of edges. Our proofs combine recent ideas to bound $T$ for general critical distributions, and modifications of techniques of Aizenman-Burchard to estimate the Hausdorff dimension of random curves.
\end{abstract}

\section{Introduction}

\subsection{Main result}

We study critical first-passage percolation (FPP) in two dimensions. This is a special case of general FPP, which is a stochastic growth model introduced in the '60s by Hammersley and Welsh \cite{HW}. The setup is as follows: on the square lattice $\mathbb{Z}^2$ with nearest-neighbor edges $\mathcal{E}^2$, we assign i.i.d. nonnegative passage times $(t_e)_{e \in \mathcal{E}^2}$ to the edges and define the induced random metric
\[
T(x,y) = \inf_{\pi : x \to y} T(\pi),~ \text{for } x,y \in \mathbb{Z}^2,
\]
where the infimum is over all lattice paths $\pi$ from $x$ to $y$, and $T(\pi) = \sum_{e \in \pi}t_e$. If none of the $t_e$'s are zero, then $T$ is a metric (generally a pseudometric), and FPP is the study of geometric and probabilistic properties of the metric space $(\mathbb{Z}^2, T)$.

Instead of assuming that no $t_e$'s are zero, one typically assumes that the common distribution function $F$ of the weights does not give too much mass to zero: $F(0) < 1/2$, as $1/2$ is the critical threshold for Bernoulli percolation in two dimensions, and this ensures that a.s. there is no infinite component of zero-weight edges (edges which we can traverse in zero time). Under this assumption and some mild integrability constraint, there is a type of law of large numbers for $T$, called the shape theorem, which states that $T(0,x)$ grows linearly as $\|x\|_1 \to \infty$: there is a deterministic norm $g$ on $\mathbb{R}^2$ such that a.s.,
\begin{equation}\label{eq: shape}
\limsup_{\|x\|_1 \to \infty} \frac{|T(0,x) - g(x)|}{\|x\|_1} = 0.
\end{equation}
So on large scales, the metric $T$ is comparable to the Euclidean one.

The focus of this paper is geodesics, specifically their (Euclidean) lengths. A geodesic from $x$ to $y$ is a minimizer for $T$: a lattice path $\pi$ from $x$ to $y$ with $T(\pi) = T(x,y)$. It has been shown \cite{WR} that for any $F$, a.s. there is a geodesic between any $x$ and $y$, although uniqueness of geodesics is equivalent to continuity of $F$. In the general case stated above, $F(0) < 1/2$, it is known that the comparability of $T$ to the Euclidean norm extends in a sense to geodesics, which have a linear number of edges. Specifically, \cite[Theorem~4.6]{ADH15} if $F(0)<1/2$, then there are $c_1,c_2>0$ such that for any $x$,
\begin{equation}\label{eq: stretched}
\mathbb{P}(m(x) \geq c_1\|x\|_1) \leq c_1e^{-c_2 \sqrt{\|x\|_1}},
\end{equation}
where $m(x)$ is the maximal number of edges in any geodesic from 0 to $x$.

If $F(0) > 1/2$, there is a.s. an infinite component of zero-weight edges, and $T(0,x)$ is stochastically bounded in $x$, so the function $g$ in \eqref{eq: shape} is identically zero. In this case, one can also show that the minimal length geodesic between two points has a linear number of edges \cite[Theorem~4]{Zhang95}. The so-called critical case, when $F(0)=1/2$, is considerably more complicated, and has only recently been significantly explored. Although there is no infinite cluster of zero-weight edges, the clusters are large enough to force $g \equiv 0$. The precise behavior of $T(0,x)$ as $\|x\|_1 \to \infty$ was quantified in \cite{DLW}, with necessary and sufficient conditions on $F$ for stochastic boundedness of $T(0,x)$ in $x$ (and whether boundedness indeed holds depends on $F$ in the critical case, as discovered by Zhang \cite{Zhang99}). Because geodesics can take paths in large critical zero-weight clusters, and these clusters have irregular structure, Kesten \cite[p.~259]{aspects} was led to ask a version of the following (see also \cite[p.~1029]{SZ}):

\begin{question}\label{q: KZ}
In the critical case, is there $s>1$ such that a.s., $N_{0,x} \leq \|x\|_1^s$ holds for only finitely many $x \in \mathbb{Z}^2$?
\end{question}

In this paper, we give a positive answer to this question, with a stretched exponential estimate similar to \eqref{eq: stretched}. For any $x,y \in \mathbb{Z}^2$, let $N_{x,y}$ be the minimal number of edges in an geodesic between $x$ and $y$. From this point forward, we assume that $F$ is critical:
\begin{equation}\label{eq: time_constraints}
F(0^-)=0 \text{ and } F(0) = 1/2.
\end{equation}
\begin{thm}\label{thm: main_thm}
Assuming \eqref{eq: time_constraints}, there exist $c>0,s>1$ such that for all nonzero $x \in \mathbb{Z}^2$,
\begin{equation}\label{eq: main_thm_bound}
\mathbb{P}(N_{0,x} \leq \|x\|_1^s) \leq (1/c) \exp\left( -\|x\|_1^c \right).
\end{equation}
\end{thm}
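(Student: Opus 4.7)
My approach combines (i) recent stretched-exponential upper bounds on $T(0,x)$ in critical 2D FPP, which furnish a ``weight budget'' for any geodesic, with (ii) a quantitative, multi-scale adaptation of the Aizenman-Burchard dimension argument, used here to lower bound the length of the shortest geodesic rather than the Hausdorff dimension of a random curve.

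First I would invoke the techniques of \cite{DLW} and subsequent work to obtain a sub-power function $L$ and a constant $c_0>0$ with
\begin{equation*}
\mathbb{P}\bigl(T(0,x)\ge L(\|x\|_1)\bigr)\le \exp\bigl(-\|x\|_1^{c_0}\bigr),
\end{equation*}
so that on the complementary event every geodesic has total weight at most $L(\|x\|_1)$. Next I would introduce, for each dyadic scale $\ell$ and each box $B$ of side $\ell$, a scale-invariant obstruction event $O_B$, depending only on the edge weights inside $B$, with $\mathbb{P}(O_B)\ge p_0>0$ uniformly in $\ell$ (via critical duality applied to the Bernoulli percolation $\{t_e>0\}$ and an appropriate truncation of $F$), and such that on $O_B$ every crossing of $B$ either pays weight at least $w_0>0$ or uses at least $(2+\eta)\ell$ edges, for constants $w_0,\eta>0$ depending only on $F$. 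The $O_B$'s are independent across disjoint boxes. Iterating over dyadic scales $\ell=2^k$ up to $\|x\|_1$, using independence within each scale and a Bernstein-type concentration for the fraction of active obstructions encountered, I would show that on an event of probability at least $1-\exp(-\|x\|_1^{c_1})$, any lattice path from $0$ to $x$ of length at most $\|x\|_1^{1+\delta}$ accumulates total weight strictly exceeding $L(\|x\|_1)$. Combined with Step 1, no such path can be a geodesic, and the theorem follows with, e.g., $s=1+\delta/2$.

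The main technical obstacle is the multi-scale argument, specifically its calibration to yield the stretched exponential rate uniformly over candidate short paths. A naive union bound over such paths carries a factor $4^{\|x\|_1^{1+\delta}}$, which no per-path Hoeffding-type bound on truncated edge weights can beat: the best available rate $\exp(-c|\pi|)$ requires a constant $c$ depending on $F$ which need not exceed $\log 4$, since most edges carry zero weight and the positive part of $F$ may be concentrated arbitrarily close to $0$. The resolution, in the spirit of Aizenman-Burchard, is to group candidate paths by their coarse trajectory on an intermediate-scale grid of side $\|x\|_1^{\alpha}$, reducing the entropy of trajectory classes to $o(\|x\|_1)$, and then invoke the obstruction estimate at this coarse scale within each class. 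Calibrating $\alpha$ against the budget $L(\|x\|_1)$ and the target exponent $1+\delta$, and then turning approximate independence across scales and across disjoint boxes into a single stretched exponential bound $\exp(-\|x\|_1^{c})$, is where the Aizenman-Burchard technique requires genuine modification and where the bulk of the work lies.
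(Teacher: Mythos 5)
There is a genuine gap, and it sits exactly where the critical case is hard: your obstruction event $O_B$ with scale-independent constants cannot exist for general critical $F$. To force every crossing of a box of side $\ell$ to pay weight at least a fixed $w_0>0$, the box must be traversed in the dual direction by a path of edges with $t_e\ge w_0$, i.e.\ edges with $\omega_e>q$ for the fixed supercritical parameter $q=F(w_0^-)>p_c$; the probability of such a dual crossing decays to zero (exponentially in $\ell/L(q)$) as $\ell\to\infty$, so $\mathbb{P}(O_B)\ge p_0>0$ uniformly in $\ell$ is false. The only way to keep the blocking probability bounded below uniformly is to let the threshold depend on the scale, $q=p_{c'\ell}$ calibrated to the correlation length (this is precisely the paper's notion of a ``slow'' cylinder and the role of \eqref{eq: closed_dual_p_n}); but then the weight extracted per obstruction is $F^{-1}(p_{c'\ell})$, which for an arbitrary critical $F$ may tend to $0$ arbitrarily fast in $\ell$, and your fixed $w_0$ is gone. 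The alternative branch of your dichotomy (``or the crossing uses $(2+\eta)\ell$ edges'') does not rescue this, because a crossing that is not $p_c$-open merely pays \emph{some} positive weight, possibly minuscule.

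This in turn breaks the budget comparison of your Step 1. Once the per-obstruction gain is a scale-dependent quantity of the form $F^{-1}(q_n)$, comparing the accumulated weight with a generic sub-power bound $L(\|x\|_1)$ for $T(0,x)$ yields no contradiction: for critical $F$ with $T(0,x)$ stochastically bounded, $L$ can be taken bounded, while the total gain from obstructions can still be $o(1)$. The paper closes this loop by proving an upper bound in the \emph{same currency}: the maximal weight any restricted geodesic spends in the annulus $Ann_{n+1}$ is at most $CF^{-1}(q_n)\log^3 n$ almost surely (Proposition~\ref{prop: upper_bound}), while non-sparse straight runs would force weight at least $\tfrac18(\log^4 n)F^{-1}(q_n)$ as in \eqref{eq: q_n_lower}; the contradiction is $\log^3$ versus $\log^4$ with the identical factor $F^{-1}(q_n)$ on both sides. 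Note also that point-to-point bounds on $T(0,x)$ from \cite{DLW} cannot simply be imported as the budget, since in the critical case the weight of a geodesic is dominated by its portion near the origin; the budget must be localized to the annulus at the scale where the obstructions live, which is why the paper works with $T_{max}(n)$. Your entropy-reduction idea (coarse-graining candidate paths on an intermediate grid) is sound in spirit and is essentially the paper's Pisztora-type block argument, which needs only of order $3^{m-n}\approx\|x\|^{1-s'/s}$ disjoint bad annuli to produce the stretched exponential; but the per-block input it requires is exactly the calibrated statement above (Theorem~\ref{thm: length_thm}), not the uniform-constant obstruction you posit, and without it the scheme does not close.
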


We end this section with various remarks about the main result. First, our proof works on lattices where near-critical percolation estimates hold, and this includes most regular planar lattices, including edge or site FPP on the hexagonal lattice, square lattice, triangular lattice, etc. The main difference is that one must assume that $F(0) = p_c$, where $p_c$ is the critical threshold for percolation on that lattice. Next, it is important to point out that using the work of Aizenman-Burchard, we can give a simple proof for Question~\ref{q: KZ} (see Section~\ref{sec: sketch}), but this argument only gives a small polynomial decay of the probability in \eqref{eq: main_thm_bound}. Our main inequality is sufficient (but polynomial decay is not) to, for example, bound the length of all geodesics simultaneously between points sufficiently far apart in a box.



Last we briefly remark on the proof; a full outline of it appears in Section~\ref{sec: sketch}. The strategy is to combine a block argument from Pisztora \cite{P} with the Aizenman-Burchard technique. The block argument is quite similar to \cite{P}. The main difficulty is in the Aizenman-Burchard technique: the hypothesis of \cite{AB} does not hold for our model. Their method relies on a strong independence assumption: to apply their theorem one would need to know that there is $\rho<1$ such that for any number of thin cylinders $C_1, \ldots, C_k$ which are sufficiently separated, the probability that a geodesic crosses all of these cylinders in the long direction (has a ``straight run'' in each cylinder) is at most $\rho^k$. Under this assumption, we could copy their arguments to conclude that straight runs are sufficiently sparse globally to deduce a superlinear lower bound for geodesic length. Although FPP is built on i.i.d. weights, segments of geodesics are highly correlated, and such an independence assumption is not obviously true (and is actually false in the non-critical case). So we need to show differently that hierarchies of nested cylinders cannot contain too many straight runs by geodesics. The approach is to show that if geodesics do cross too many such cylinders, there is a high probability that many of these cylinders are ``slow'' (in a sense described by near-critical percolation paths) and force the passage time of geodesics to be large. We combine this with new upper bounds on passage times of geodesic segments using ideas from \cite{DLW} to conclude that straight runs are sparse.

\subsection{Notation and tools from percolation}

We will couple the FPP model to various percolation models. To do this, we let $(\omega_e)$ be a collection of i.i.d. uniform $(0,1)$ random variables and $t_e = F^{-1}(\omega_e)$, where $F^{-1}$ is the generalized inverse
\[
F^{-1}(t) = \sup\{s : F(s) < t\},~ t \in (0,1).
\]
Then the variables $(t_e)$ are i.i.d. with distribution $F$. For $p \in [0,1]$, an edge $e$ is called $p$-open if $\omega_e \leq p$ and $p$-closed otherwise. A path is a sequence of edges (or their endpoints, or both) such that each consecutive pair of edges shares an endpoint, and a circuit is a path which starts and ends at the same point. If $\Gamma$ is a path, we write $\#\Gamma$ for the number of edges in it. For $n \geq 1$, the box $B(n)$ is defined as $[-n,n]^2$.

Next we define the dual lattice, which is used in Section~\ref{sec: block}. It is $(\mathbb{Z}^2)^* = \mathbb{Z}^2 + (1/2,1/2)$ with its set of nearest-neighbor edges $(\mathcal{E}^2)^*$. An edge $e$ has exactly one dual edge $e^*$ that bisects it. We define variables $(\omega_{e^*})$ by the rule $\omega_{e^*} = \omega_e$ and correspondingly use the terms $p$-open and $p$-closed. Thus a $p_c$-closed dual path is a path of edges $e^*$ on the dual lattice each with $\omega_{e^*} > p_c$.

Last we give properties of correlation length, which will be vital for our work. For $\epsilon>0$ and $p > p_c$, we define
\[
L(p,\epsilon) = \min\{ m \geq 1 : \mathbb{P}(\sigma(p,m,m)) > 1-\epsilon\},
\]
where $\sigma(p,m,m)$ is the event that the box $B(m)$ has a $p$-open left-right crossing. This is a path, all of whose edges are $p$-open and in $B(m)$, which touches the left and right sides of the box. It is shown in \cite[Eq.~(1.24)]{kesten_scaling_relations} that for some $\epsilon_0$ and any $\epsilon_1, \epsilon_2 \in (0,\epsilon_0]$, one has
\[
L(p,\epsilon_1) \asymp L(p,\epsilon_2) \text{ as } p \downarrow p_c,
\]
so we just set $L(p) = L(p,\epsilon_0)$. (The notation $\asymp$ means that $L(p,\epsilon_1)/L(p,\epsilon_2)$ is bounded away from 0 and $\infty$ as $p \downarrow p_c$.) We will use the following properties of correlation length. Setting
\[
p_m = \min\{p : L(p) \leq m\},
\]
\begin{itemize}
\item (see \cite[Eq.~(2.10)]{jarai_03}) there exists $c_1 \in (0,1)$ such that for all $m \geq 1$,
\begin{equation}\label{eq: L_p_n}
c_1 m \leq L(p_m) \leq m,
\end{equation}
\item (see \cite[Sec.~2.1]{DSV}) for positive integers $k$ and $l$, there exists $\delta_{k,l}>0$ such that for any positive integer $n$ and for all $p \in [p_c,p_n]$,
\[
\mathbb{P}(\text{there is a } p\text{-open horizontal crossing of }[0,kn] \times [0,ln]) > \delta_{k,l}
\]
and
\begin{equation}\label{eq: closed_dual_p_n}
\mathbb{P}(\text{there is a }p\text{-closed horizontal dual crossing of } [0,kn] \times [0,ln]) > \delta_{k,l}.
\end{equation}
Using these inequalities and a gluing construction involving the FKG inequality \cite[Theorem~2.4]{grimmett}, one can construct open or closed circuits in annuli around 0. For example, uniformly in $n$ and $p \in [p_c,p_n]$,
\[
\mathbb{P}(\text{there is a }p\text{-open circuit around }0 \text{ in }B(2n) \setminus B(n)) > 0.
\]
We will use similar statements throughout the paper. See \cite{grimmett} for the relevant techniques and background.
\item in the proof of  Lemma~\ref{lem: kiss_lem} we will use ``quasi-multiplicativity'' of certain events. Generally this means that probabilities of arm events factor up to a constant. We will use this for 3-, 4-, and 5-arm events in both the full- and half-planes, for near critical values of $p$. One example of this property is the following: for $n_1 \leq n_2$, and $p,q$ with $L(p),L(q) \geq n_2$, let $A(n_1,n_2,p,q)$ be the event that there are two $p$-open disjoint paths from $\partial B(n_1)$ to $\partial B(n_2)$ and two $q$-closed dual paths from $\partial B(n_1)$ to $\partial B(n_2)$ so that the open and closed paths alternate. Then there is a constant $c$ such that for all $0 \leq n_1 \leq n_2 \leq n_3$ and $q,p>p_c$ with $L(p),L(q) \geq n_3$,
\begin{equation}\label{eq: QM}
\mathbb{P}(A(n_1,n_3,p,q)) \geq c \mathbb{P}(A(n_1,n_2,p,q)) \mathbb{P}(A(n_2,n_3,p,q)).
\end{equation}
(See \cite{nolin} for more background on arm events and their properties, like quasi-multiplicativity.)
\end{itemize}

\subsection{Sketch of proof and outline of paper}\label{sec: sketch}

\subsubsection{Argument for Question~\ref{q: KZ}}
We begin with a simple proof of the following statement: there exists $s>1$ such that a.s.,
\begin{equation}\label{eq: easy_a_s}
N_{0,x} \leq \|x\|_1^s \text{ for only finitely many } x \in \mathbb{Z}^2.
\end{equation}
This essentially follows from the FKG inequality, the Russo-Seymour-Welsh theorem \cite[Ch.~11]{grimmett} and the work of Aizenman-Burchard. The latter implies (see a discussion in \cite[p.~3597]{DHS_RW}) that there is $s>1$ such that
	\begin{equation}\label{equ:long_open_path}
		\lim_n \mathbb{P}\left(\exists \text{ a }p_c\text{-open path } \Gamma\text{ crossing } B(3^4n) \setminus B(n) \text{ with } \# \Gamma \leq n^s\right)=0.
	\end{equation}

	

Next, for $k \geq 0$, let $E_k$ be the event that the follow conditions hold: putting $Ann_m = B(3^{m+1}) \setminus B(3^m)$,
\begin{enumerate}
\item there is an $p_c$-open circuit around 0 in $Ann_{3k}$,
\item there is an $p_c$-open circuit around 0 in $Ann_{3k+2}$,
\item  there is an  $p_c$-open path crossing the rectangle $[3^{3k},3^{3k+3}]\times[-3^{3k},3^{3k}]$ from the left side to the right, and
\item any $p_c$-open path $\Gamma$ crossing the annulus $Ann_{3k+1}$ satisfies $ \# \Gamma \geq 3^{3ks}$.
\end{enumerate}
(See Figure~\ref{fig: a_s_event} for an illustration.) By the RSW theorem, the FKG inequality, and (\ref{equ:long_open_path}), there exists a constant $c_1>0$ such that for all $k$
\[
\mathbb{P}(E_k)\geq c_1.
\]

\begin{figure}[!ht]
\centering
\includegraphics[width=6in,trim={0 8cm 2cm 6.5cm},clip]{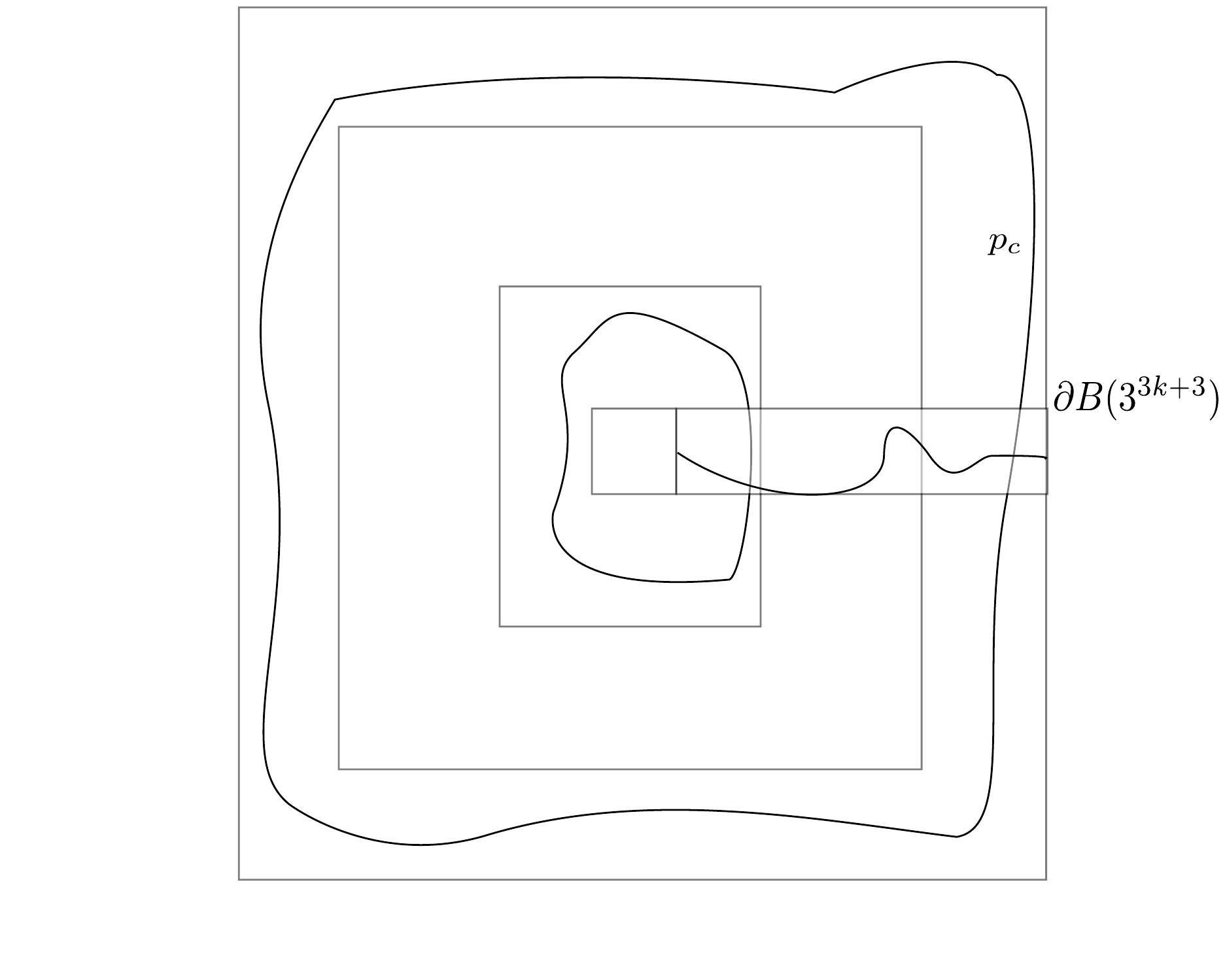}
\caption{Illustration of the event $E_k$. The boxes $B(3^{3k}), \ldots, B(3^{3k+3})$ are in increasing order. The curves represent the $p_c$-open circuits from items 1, 2, and the $p_c$-open path from item 3. By planarity, each of these curves overlap, and any geodesic from $\partial B(3^{3k})$ to $\partial B(3^{3k+3})$ must therefore cross $Ann_{k+1}$ through a $p_c$-open circuit.}
\label{fig: a_s_event}
\end{figure}

 For $k$ with $3^{3k+3}\leq n$, if $E_k$ occurs, then any geodesic from the origin to $x\notin B(n)$ must use the $p_c$-open path from item 3 to cross the annulus $Ann_{3k+1}$. Thus if $\Gamma$ is any such geodesic,
 \begin{equation}
 \# \Gamma \geq \sum_{k = 1}^{\lfloor \frac{1}{3} \log_3 n \rfloor - 1} 3^{3ks}\mathbf{1}_{E_k}.
 \end{equation}
 Note that the $E_k$'s are independent for different $k$'s. Fixing $1<s'<s$ then, there exists $c>0$ such that
 \begin{align*}
 \mathbb{P}\left( \min_{x\notin B(n)}{N_{0,x}\leq n^{s'}} \right) \leq \mathbb{P}\left(  \bigcap_{k= \lceil \frac{s'}{3s} \log_3 n \rceil + 1}^{\lfloor \frac{1}{3} \log_3 n \rfloor - 1} E_k^c  \right) &\leq (1-c_1)^{\frac{s-s'}{3s}\log_3n-2}\\
&\leq n^{-c}.
 \end{align*}
The bounds on $k$ in the above intersection are to ensure that $3^{3k+3} \leq n$ and $3^{3ks} > n^{s'}$. Applying this to $n$ which are powers of $3$ and using Borel-Cantelli, we find that a.s. for all large $m$,
\[
\min_{x\notin B(3^m)}N_{0,x}>3^{ms'}.
\]
For $x \in \mathbb{Z}^2$ with $\|x\|_\infty$ large, picking $k=\lfloor \log_3 \|x\|_\infty \rfloor - 1$, we have (noting $2\geq s>s'$)
\[
N_{0,x}\geq \min_{y\notin B(3^k)}N_{0,y}>3^{ks'} \geq c_2 \|x\|_\infty ^{s'}
\]
for some $c_2>0$. Decreasing $s'$ and relabeling it as $s$ gives \eqref{eq: easy_a_s}.	

\subsubsection{Outline of proof of Theorem~\ref{thm: main_thm}}
Here we give a brief outline of the proof of Theorem~\ref{thm: main_thm}. There are two parts: a block estimate, stated in Theorem~\ref{thm: length_thm}, and a block argument which upgrades this estimate to the stretched-exponential convergence in Theorem~\ref{thm: main_thm}. The block argument is an adaptation of the idea of Pisztora from his extension (and improvement) of ideas of Aizenman-Burchard in the near-critical percolation setting \cite{P}. Since this part is more standard, we focus here on the block estimate, Theorem~\ref{thm: length_thm}.

Two important things to notice about the statement in Theorem~\ref{thm: length_thm} are that (a) the geodesics referred to there are $T^{(n)}$-geodesics; that is, they are minimal-weight paths restricted to a box $B(3^{n+3})$ and crossing the annulus $B(3^{n+3}) \setminus B(3^n)$ (this restriction is needed in the block argument), and (b) the estimate holds almost surely for all large $n$. This second point is an upgrade from the argument given in the last subsection for \eqref{eq: easy_a_s}: the event $E_k$ there only holds with positive probability. This means that, for example, we need to control geodesic lengths even on rare events in which no $p_c$-open paths (or even near-critical paths) cross the annulus. This makes a considerable difficulty, since the arguments of Aizenman-Burchard do not apply for highly supercritical paths.

The proof of Theorem~\ref{thm: length_thm} is itself split into two parts. The first is an almost sure bound on the passage time of paths that cross annuli, stated in Proposition~\ref{prop: upper_bound}. The main estimate there is that a type of maximal passage time $T_{max}(n)$ of paths crossing the annulus $Ann_{n+1} = B(3^{n+2}) \setminus B(3^{n+1})$ satisfies
\begin{equation}\label{eq: t_max_inequality}
T_{max}(n) \leq CF^{-1}(q_n) \log^3 n \text{ for large }n,
\end{equation}
where $q_n > p_c$ is a certain near-critical percolation parameter. Similar bounds were shown in \cite{DLW} for $T(0,\partial B(n))$, the minimal passage time from $0$ to points on the boundary of $B(n)$, but we cannot use them for an annulus estimate. The reason is that if $F$ is critical and such that $T(0,x)$ is stochastically bounded in $x$, then the passage time of segments far from 0 are dominated heavily by those of segments near the origin. So we need to adapt their arguments to the annulus setting.

The proof of \eqref{eq: t_max_inequality} involves showing that with high probability, there are $q_n$-open circuits around 0 in $Ann_n$ and $Ann_{n+2}$, with a $q_n$-open path connecting them. Any geodesic crossing $B(3^{n+3}) \setminus B(3^n)$ can be modified, replacing a portion of it with the union of these $q_n$-open paths, and we obtain an upper bound for $T_{max}(n)$ by the passage time of these paths. To bound the passage time of these paths, we show in Lemma~\ref{lem: T_p} (as in \cite{DLW}) that the only edges contributing to the passage time of these paths are ones which are $q_n$-open but $p_c$-closed, and which are associated to certain $4$-arm events. Then we adapt moment bounds from \cite{kiss} for arm events in Lemma~\ref{lem: kiss_lem} to bound the number of such edges.

Once we have \eqref{eq: t_max_inequality}, we dive into the machinery of Aizenman-Burchard, analyzing the number of ``straight runs'' for geodesics. Roughly speaking, for the length of a geodesic to be linear, it must pass clear (have a straight run) through many long thin cylinders at successively decreasing scales (starting from scale $3^n$). The main inequality from Aizenman-Burchard, stated as Proposition~\ref{prop: tree}, gives a lower bound for the $s;\ell$-\emph{capacity} of a path given that it does not have too many straight runs. This capacity is related to the length of the curve in Lemma~\ref{lem: length_lemma}, and thus what we must show is that with high probability, no geodesic crossing the annulus $Ann_{n+1}$ has straight runs through cylinders at more than half of the scales of the form $L_k = \gamma^k 3^n$ from $3^n$ down to 1, where $\gamma>1$ is some large number. The difficulty here is that we do not have any precise description of geodesics in this model (including their geometry), and the only information we have is the bound on $T_{max}(n)$ above.

To show that geodesics have ``sparse'' straight runs, in Section~\ref{sec: straight_runs}, we prove that if a geodesic passes through a long thin cylinder, this cylinder has a high probability to be ``slow.'' In other words, it is likely that the cylinder is crossed in the short direction by at least 4 dual paths which are well-separated and have weight at least $p_m$, where $m$ is related to the scale of the rectangle. By choosing this $p_m$ properly, we show in Proposition~\ref{prop: straight_runs} that such a path would pass through enough slow cylinders at successive scales to have total passage time at least $(1/8)(\log^4 n)F^{-1}(q_n)$ (see \eqref{eq: q_n_lower}), where $q_n$ is the near-critical parameter above in \eqref{eq: t_max_inequality}, giving a contradiction. In short, if a geodesic does not have sparse straight runs, its passage time violates \eqref{eq: t_max_inequality}. We combine these tools in Section~\ref{sec: main_proof} to conclude the block estimate.

\section{Block estimate}\label{sec: block}

For $n \geq 1$ and $x,y \in B(3^{n+3}) = [-3^{n+3},3^{n+3}]^2$, let $T^{(n)}(x,y)$ be the minimal passage time of all paths from $x$ to $y$ that stay in $B(3^{n+3})$. Let $N_{x,y}^{(n)}$ be the minimal number of edges in any $T^{(n)}$-geodesic from $x$ to $y$. We aim to show here:
\begin{thm}\label{thm: length_thm}
There exists $s>1$ such that almost surely, the following holds for all large $n$:
\[
N_{x,y}^{(n)} \geq 3^{ns} \text{ for all } x \in B(3^n),~ y \in \partial B(3^{n+3}).
\]
\end{thm}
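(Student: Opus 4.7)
The plan is to derive Theorem~\ref{thm: length_thm} by Borel--Cantelli, setting $E_n$ equal to the event that some $T^{(n)}$-geodesic from $B(3^n)$ to $\partial B(3^{n+3})$ has fewer than $3^{ns}$ edges, and showing $\sum_n \mathbb{P}(E_n) < \infty$ for some $s>1$. The central strategy is a proof by contradiction at each scale. If such a short geodesic exists, then by the Aizenman--Burchard capacity machinery (Proposition~\ref{prop: tree}), together with the length--capacity conversion of Lemma~\ref{lem: length_lemma}, the curve would fail to be sparse in straight runs: it must pass straight through cylinders at more than half of the geometric scales $L_k = \gamma^k 3^n$ ranging from $3^n$ down to $1$, where $\gamma$ is the large constant fixed by the Aizenman--Burchard input. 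On the other hand, by Proposition~\ref{prop: straight_runs}, such a dense hierarchy of straight runs forces the geodesic to cross many ``slow'' cylinders and accumulate passage time at least $\tfrac{1}{8} F^{-1}(q_n) \log^4 n$; while Proposition~\ref{prop: upper_bound} gives $T_{\max}(n) \leq C F^{-1}(q_n) \log^3 n$. For large $n$, $\log^4 n$ dominates $C \log^3 n$, yielding a contradiction.

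Concretely I would proceed as follows. First, fix the scale parameter $\gamma$, the ``sparse straight runs'' cutoff, and the exponent $s = s(\gamma) > 1$ so that Proposition~\ref{prop: tree} and Lemma~\ref{lem: length_lemma} together imply: every lattice path in $B(3^{n+3})$ connecting $B(3^n)$ to $\partial B(3^{n+3})$ with fewer than $3^{ns}$ edges must have a straight run through a thin cylinder at more than half of the scales $L_0, L_1, \ldots, L_{K_n}$, where $K_n = \Theta(\log n)$. Second, let $U_n$ be the failure event of the upper bound in Proposition~\ref{prop: upper_bound} (so that outside $U_n$, every path crossing $Ann_{n+1}$ has $T$-cost at most $C F^{-1}(q_n) \log^3 n$), and let $V_n$ be the failure event of Proposition~\ref{prop: straight_runs} (so that outside $V_n$, any geodesic with dense straight runs has $T$-cost at least $\tfrac{1}{8} F^{-1}(q_n) \log^4 n$). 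Third, on the complement of $U_n \cup V_n$, any $T^{(n)}$-geodesic from $B(3^n)$ to $\partial B(3^{n+3})$ must cross $Ann_{n+1}$ and therefore has $T$-cost at most $C F^{-1}(q_n) \log^3 n$, hence cannot have dense straight runs, hence by step one has at least $3^{ns}$ edges. Thus $E_n \subseteq U_n \cup V_n$ for large $n$, and the summability of $\mathbb{P}(U_n) + \mathbb{P}(V_n)$ closes the argument by Borel--Cantelli.

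The main obstacle is the quantitative bookkeeping between these three estimates. The Aizenman--Burchard capacity bound gives a lower bound on the number of sparse scales required to force a power-law length $3^{ns}$; one must calibrate $\gamma$, the sparsity cutoff, and $s$ so that the scale depth $K_n$ is both (i) long enough that the slow-cylinder lower bound in Proposition~\ref{prop: straight_runs} produces the factor $\log^4 n \gg C\log^3 n$, and (ii) short enough that the exponent $s$ remains strictly greater than $1$. A second, more subtle point is that the contradiction argument must apply \emph{uniformly} over all endpoint pairs $(x,y)$ with $x \in B(3^n)$ and $y \in \partial B(3^{n+3})$, and over all $T^{(n)}$-geodesics between them; since FPP geodesics are highly correlated, one cannot argue endpoint-by-endpoint. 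This uniformity, together with the union bound over all possible straight-run hierarchies at depth $K_n$, must be absorbed into the statement of Proposition~\ref{prop: straight_runs}, so that $V_n$ controls \emph{every} candidate geodesic at once. Once this absorption is in place, the rest of the proof of Theorem~\ref{thm: length_thm} is the short contradiction sketched above.
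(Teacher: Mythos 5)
Your proposal follows essentially the same route as the paper: Proposition~\ref{prop: straight_runs} is proved by exactly the contradiction you describe (a non-sparse hierarchy of straight runs forces the geodesic segment to cross enough slow cylinders to accumulate passage time $\tfrac{1}{8}F^{-1}(q_n)\log^4 n$, contradicting the $CF^{-1}(q_n)\log^3 n$ bound of Proposition~\ref{prop: upper_bound}), and the resulting sparsity is fed into Proposition~\ref{prop: tree} and Lemma~\ref{lem: length_lemma}, with $s$ decreased slightly at the end to absorb the $\exp(C\log^4 n)$ correction coming from the cutoff $k_0=\lceil\log^4 n\rceil$. One small correction: the scales $L_k=\gamma^{-k}$ run down to the lattice spacing, so the depth of the hierarchy is $\Theta(n)$ (only the sparsity cutoff $k_0$ is polylogarithmic), not $\Theta(\log n)$ as you wrote.
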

The proof is split into several sections. In Section~\ref{sec: cylinder_times}, we use tools from \cite{DLW} to estimate the minimal passage time across cylinders and then in Section~\ref{sec: annulus_times}, paste these together to get bounds for the passage time across annuli. In Section~\ref{sec: dimension_bounds}, we use the machinery of Aizenman-Burchard to get lower bounds on the dimension of geodesics by estimating the number of ``straight runs'' they have: if they have too few, then they are forced to go through too many edges of nonzero passage time and they violate the passage time estimates from Section~\ref{sec: annulus_times}. We bring this all together in Section~\ref{sec: main_proof} to prove Theorem~\ref{thm: length_thm}.

	

\subsection{A bound for cylinder times}\label{sec: cylinder_times}
In this section we would like to estimate the minimal passage time of any path crossing the rectangle $R(n) = [-2n,2n] \times [-n,n]$ in the first coordinate direction. So let $T(n)$ be the minimal passage time among all paths which remain in $R(n)$ and connect the left side $\{-2n\} \times [-n,n]$ to the right side $\{2n\} \times [-n,n]$. The main result of this section is a bound on $T(n)$:
\begin{prop}\label{prop: rectangle_crossing}
There exists $C>0$ such that for all $n \geq 1$ and $p > p_c$ with $L(p) \leq n$,
\[
\mathbb{P}\left(T(n) \geq \lambda F^{-1}(p) \left( \frac{n}{L(p)} \right)^2\right) \leq e^{-C\lambda} + \exp\left( -C \frac{n}{L(p)} \right) \text{ for } \lambda \geq 0.
\]
\end{prop}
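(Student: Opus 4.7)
The strategy is to produce an explicit left--right $p$-open crossing $\pi$ of $R(n)$ and to upper-bound $T(\pi)$; this automatically upper-bounds $T(n)$. Since every edge of a $p$-open path has weight at most $F^{-1}(p)$, we have $T(\pi) \leq F^{-1}(p)\#\pi$, and in fact only edges with $\omega_e \in (p_c,p]$ make a positive contribution, a point that is crucial to the moment analysis. The two terms on the right-hand side of the bound correspond to two failure modes: the renormalization event that such a $\pi$ can be found across $R(n)$ (failure probability $\exp(-Cn/L(p))$), and a concentration tail for the total weight along $\pi$ (giving the factor $e^{-C\lambda}$).

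For the renormalization step, I would tile $R(n)$ into sub-boxes of side $KL(p)$ for a large constant $K$ and call a sub-box \emph{good} if it supports $p$-open crossings in both coordinate directions together with $p$-open circuits in concentric annuli around its center. By the definition of $L(p)$, the RSW theorem, and the FKG-based gluing construction described after \eqref{eq: closed_dual_p_n}, each sub-box is good with probability at least $1-\epsilon$ for any prescribed $\epsilon>0$, provided $K$ is chosen large. A Liggett-Schonmann-Stacey-type stochastic domination then places the good sub-boxes below a supercritical Bernoulli site percolation on the coarse-grained grid. A standard crossing argument in supercritical site percolation gives, with probability at least $1-\exp(-Cn/L(p))$, a horizontal chain of $O(n/L(p))$ good sub-boxes across $R(n)$. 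Concatenating the in-box $p$-open crossings along this chain, gluing consecutive ones through their surrounding circuits, produces $\pi$.

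To bound $T(\pi)$ on that event, I would borrow the approach of \cite{DLW} and \cite{kiss}. Choose $\pi$ inside each sub-box to be the \emph{lowest} $p$-open horizontal crossing. Any edge $e$ on $\pi$ with $\omega_e \in (p_c,p]$ then necessarily sits at a $4$-arm configuration at scale $L(p)$: two $p$-open arms tracking $\pi$ and two $p_c$-closed dual arms, one in the region below $\pi$ (by lowest-ness) and one in the region above. Quasi-multiplicativity \eqref{eq: QM} and the near-critical moment bounds from \cite{kiss} then let us bound the contribution of each sub-box to the weighted length of $\pi$ in expectation, and summing over the $O(n/L(p))$ sub-boxes on the chain (together with the sub-dominant contributions from the connecting circuits) produces an upper bound on $\mathbb{E}[T(\pi)]$ of the order $F^{-1}(p)(n/L(p))^2$ claimed. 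Finally, a Bernstein-type concentration inequality, exploiting the finite range of dependence between non-adjacent sub-boxes, yields the $e^{-C\lambda}$ tail.

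The principal obstacle is that $\pi$ is chosen as a function of the random environment, so the weights on $\pi$ are correlated with $\pi$ itself; direct i.i.d.\ concentration is not available. The $4$-arm/lowest-crossing characterization is designed to cope with this: it reduces the problem of bounding positive-weight edges on $\pi$ to the control of near-critical arm events, which are uniformly well behaved for $p$ with $L(p)\leq n$ by \eqref{eq: QM} and \eqref{eq: L_p_n}, and the moment framework of \cite{kiss} then converts these arm estimates into the concentration statement required.
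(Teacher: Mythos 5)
Your overall architecture (produce a $p$-open crossing whose existence fails with probability at most $\exp(-Cn/L(p))$, then control its weight through a near-critical arm count treated by the Kiss moment method) matches the paper's, but the key structural step is wrong: the \emph{lowest} $p$-open crossing does not have the four-arm property you claim. Lowest-ness gives each edge of $\pi$ a $p$-closed (hence $p_c$-closed) dual arm to the bottom, but it gives no closed dual arm above: an edge $e$ with $\omega_e\in(p_c,p]$ can perfectly well lie on the lowest crossing while a zero-weight ($p_c$-open) detour runs just above it, since the lowest crossing is selected with no regard to edge weights. This is not a removable technicality. With only three arms (two $p$-open along $\pi$, one closed below), the expected number of positive-weight edges per $L(p)$-box is of order $(p-p_c)L(p)^2\pi_3(L(p))$, and since only the four-arm combination $(p-p_c)L(p)^2\pi_4(L(p))$ is bounded (\cite[Prop.~34]{nolin}, as used after Lemma~\ref{lem: kiss_lem}), this three-arm count diverges polynomially in $L(p)$; your chain of $O(n/L(p))$ boxes would then give a bound far exceeding $F^{-1}(p)(n/L(p))^2$. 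The second closed arm is exactly what makes the stated bound possible, so the argument as written fails at this point.

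The repair is to select the crossing by optimizing passage time rather than position, which is what the paper (following \cite{DLW}) does: among $p$-open crossings of the longer strip $S(n)$, take one of minimal passage time (Lemma~\ref{lem: T_p}). If a positive-weight edge $e$ on it lacked two disjoint $p_c$-closed dual arms to the top and bottom of the strip, duality would produce a $p_c$-open, i.e.\ zero-weight, path reconnecting the crossing around $e$, contradicting minimality; this yields the genuine four-arm event $A_n(p,e)$ and the count $N_n(p)$ with $\mathbb{P}(N_n(p)\geq\lambda(n/L(p))^2)\leq e^{-C\lambda}$. With that correction your remaining steps essentially reduce to the paper's proof; note also that the paper obtains the existence estimate directly from \cite[Eq.~(2.8)]{jarai_03} and RSW rather than via an LSS coarse-graining, and it counts positive-weight edges over all of $R(n)$ at once rather than box-by-box along a random chain, which avoids having to handle the chain's dependence on the environment in your final Bernstein step.
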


For any $p$, let $E_p(n)$ be the event that there is a $p$-open path in $S(n) := [-4n,4n] \times [-n,n]$ connecting the left side $\{-4n\} \times [-n,n]$ to the right side $\{4n\} \times [-n,n]$. By \cite[Eq.~(2.8)]{jarai_03} and the RSW theorem \cite[Sec.~12.7]{grimmett}, one has the bound
\begin{equation}\label{eq: E_p_bound}
\mathbb{P}(E_p^c) \leq \exp\left( - C \frac{n}{L(p)} \right)
\end{equation}
for some $C>0$ and all $p > p_c$, $n \geq 1$. The above proposition follows immediately from this bound, \eqref{eq: hat_T_bound}, and the two lemmas below.

On $E_p(n)$, we define $T_p(n)$ as the minimal passage time among all paths which remain in $S(n)$, are $p$-open, and connect the left side to the right side. Then we put
\[
\hat T_p(n) = \max_\Gamma \sum_{e \in \Gamma \cap R(n)} t_e,
\]
where the maximum is over paths $\Gamma$ in $S(n)$ connecting the left side to the right side, which are $p$-open and have $T(\Gamma) = T_p(n)$. Note that on $E_p(n)$, one has
\begin{equation}\label{eq: hat_T_bound}
T(n) \leq \hat T_p(n).
\end{equation}

The next result characterizes the nonzero-weight edges that contribute to $\hat T_p(n)$. For $e$ with both endpoints in $R(n)$ and $p > p_c$, let $A_n(p,e)$ be the event that all of the following occur:
\begin{enumerate}
\item $\omega_e \in (p_c,p]$,
\item there are two (vertex) disjoint $p$-open paths in $S(n)$ from $e$ to $\partial B(e,n/2)$, the translate of the box $B(n/2)$ centered at the midpoint of $e$, and
\item there are two (vertex) disjoint $p_c$-closed dual paths from $e^*$ to $\partial S(n)$, one touching the top side and one touching the bottom.
\end{enumerate}
Define
\[
N_n(p) = \sum_{e \subset R(n)} \mathbf{1}_{A_n(p,e)}.
\]
\begin{lem}\label{lem: T_p}
For all $p > p_c$ and $n \geq 1$,
\[
\hat T_p(n) \mathbf{1}_{E_p(n)} \leq F^{-1}(p) N_n(p) \mathbf{1}_{E_p(n)}.
\]
\end{lem}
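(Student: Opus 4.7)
The plan is to fix, on the event $E_p(n)$, a $p$-open LR crossing $\Gamma$ of $S(n)$ that attains $T(\Gamma) = T_p(n)$ and, among such minimizers, maximizes $\sum_{e \in \Gamma \cap R(n)} t_e$; by definition this single path realizes $\hat T_p(n)$. I would then reduce the lemma to the pointwise claim that for every $e \in \Gamma \cap R(n)$ with $t_e > 0$, the event $A_n(p, e)$ holds. Since any $p$-open edge satisfies $t_e \leq F^{-1}(p)$, this claim immediately gives
\[
\hat T_p(n) = \sum_{e \in \Gamma \cap R(n)} t_e \leq F^{-1}(p) \#\{e \in \Gamma \cap R(n) : A_n(p,e) \text{ holds}\} \leq F^{-1}(p) N_n(p).
\]

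Fix such an $e = (v_1, v_2)$. Condition~1 of $A_n(p,e)$ is immediate: $\omega_e \leq p$ because $e \in \Gamma$ is $p$-open, and $\omega_e > p_c$ because $t_e = F^{-1}(\omega_e) > 0$ while $F^{-1}(p_c) = 0$ under \eqref{eq: time_constraints}. Condition~2 is geometric: splitting the self-avoiding $\Gamma$ at $e$ produces two sub-paths from $v_1, v_2$ to the left and right sides of $S(n)$. Since both endpoints of $e$ lie in $R(n) \subset [-2n, 2n] \times [-n, n]$ while those sides sit at $x = \pm 4n$, each sub-path must travel $\ell^\infty$-distance at least $2n > n/2$ and therefore crosses $\partial B(e, n/2)$ inside $S(n)$; self-avoidance of $\Gamma$ yields vertex-disjointness.

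The heart of the proof is condition~3, which I would establish by contradiction via a $p_c$-open shortcut. Let $\mathcal{C}^*$ denote the $p_c$-closed dual cluster of $e^*$ in $S(n)^*$, and suppose it fails to reach, say, the top of $\partial S(n)$. By standard planar duality inside the rectangle $S(n)$, there is then a connected $p_c$-open primal edge set $\mathcal{P}$ shielding $\mathcal{C}^*$ from above --- either a $p_c$-open circuit enclosing $\mathcal{C}^*$ (if $\mathcal{C}^*$ lies entirely in the interior) or a $p_c$-open arc between two arcs of $\partial S(n)$ going over $\mathcal{C}^*$ (if $\mathcal{C}^*$ touches sides of $\partial S(n)$ other than the top). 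Because $e^* \in \mathcal{C}^*$, the primal path $\Gamma$ must enter and exit the region shielded by $\mathcal{P}$; let $w_1, w_2$ be its first and last meetings with $\mathcal{P}$. Replacing the sub-segment of $\Gamma$ from $w_1$ to $w_2$ (which has weight at least $t_e > 0$, since $e$ is among its edges) by the corresponding subpath of $\mathcal{P}$ (weight $0$) and then loop-erasing produces a $p$-open LR crossing of $S(n)$ of strictly smaller weight, contradicting $T(\Gamma) = T_p(n)$. The symmetric argument at the bottom shows $\mathcal{C}^*$ also reaches the bottom, and a planar extraction using the two endpoints $u_1^*, u_2^*$ of $e^*$ yields two vertex-disjoint $p_c$-closed dual arms from $e^*$ to the top and bottom.

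The main obstacle I expect is this planar-topology step for condition~3: one must construct $\mathcal{P}$ carefully in each of the sub-cases for how $\mathcal{C}^*$ meets $\partial S(n)$, verify that $\Gamma$ genuinely transits the shielded region (which holds because $e \in \Gamma$ sits among the primal edges dual to $\mathcal{C}^*$), and then invoke a Menger-style argument inside the planar subgraph $\mathcal{C}^*$ to extract two \emph{vertex-disjoint} arms emanating from the shared edge $e^*$. This is exactly the kind of duality-plus-planarity reasoning standard in 2D FPP arguments (compare \cite{DLW}); everything outside this step is routine bookkeeping.
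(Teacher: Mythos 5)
Your proposal is correct and follows essentially the same route as the paper's proof: reduce to showing that every positive-weight edge of an extremal crossing $\Gamma$ in $R(n)$ satisfies $A_n(p,e)$, with conditions 1 and 2 immediate and condition 3 established by contradiction via a zero-weight $p_c$-open shortcut that reroutes $\Gamma$ around $e$ and violates minimality of $T_p(n)$. The paper states the duality step more tersely ("by duality we can find a $p_c$-open path that connects $\Gamma$ to itself"), whereas you spell out the shielding/planarity details, but the argument is the same.
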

\begin{proof}
The proof is quite similar to \cite[Lemma~5.2]{DLW}. If the event $E_p(n)$ does not happen, then both sides are zero, the inequality holds. Thus we suppose the $E_p(n)$ happens.

Suppose $\Gamma$ is a path which remains in $S(n)$, is $p$-open, connects the left side to the right side of $S(n)$ and is such that $\hat{T}_p(n)=\sum_{e\in \Gamma\cap R(n)}{t_e}$. Note if $\omega_e\leq p_c$, then $t_e=F^{-1}(\omega_e)=0$. Hence
\[
\hat T_p(n)=\sum_{e\in\Gamma\cap R(n),\omega_e>p_c }{t_e}
\]
Since $\Gamma$ is $p$-open, we have $t_e\leq F^{-1}(p)$ for all $e\in\Gamma\cap R(n)$. Then we have
\[
\hat T_p(n)\leq F^{-1}(p)\# \{e\in\Gamma\cap R(n):\omega_e\in (p_c,p] \}
\]
Thus it suffices to show that for each $e\in \Gamma\cap R(n)$ with $p_c<\omega_e\leq p$, the event $A_n(p,e)$ occurs.

The first condition is obvious, and the second follows from that $e\in \Gamma$ and $\Gamma$ connects the left  side to the right side of $S(n)$. For the third condition, if it does not hold, then by duality we can find a $p_c$-open path that connects $\Gamma$ to itself, and the path $\Gamma'$ formed by replacing the portion of $\Gamma$ with this $p_c$-open path will avoid $e$. Since these $p_c$-open edges have zero passage time, this contradicts extremality of $\Gamma$.
\end{proof}

The next lemma gives a tail bound on the distribution of the number $N_n(p)$.
\begin{lem}\label{lem: kiss_lem}
There exists $C>0$ such that for all $n \geq 1$ and $p>p_c$ with $L(p) \leq n$,
\[
\mathbb{P}\left( N_n(p) \geq \lambda \left( \frac{n}{L(p)} \right)^2 \right) \leq e^{-C\lambda}.
\]
\end{lem}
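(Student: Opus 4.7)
The plan is to prove Lemma~\ref{lem: kiss_lem} via a first-moment estimate upgraded to an exponential tail through higher-moment control of arm events, following the framework of Kiss~\cite{kiss}. The key inputs are quasi-multiplicativity \eqref{eq: QM} and Kesten's near-critical scaling relation $(p-p_c)\, L(p)^2 \pi_4(L(p)) \asymp 1$.

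For the first moment, fix $e \subset R(n)$. Since $\omega_e$ is independent of everything else, $\mathbb{P}(A_n(p,e)) \leq (p-p_c)\,\mathbb{P}(B_e)$, where $B_e$ is the 4-arm event (two $p$-open primal arms alternating with two $p_c$-closed dual arms) on the edges other than $e$, measured out to scale $L(p)$ (since requiring longer arms can only decrease the probability). Because $L(p) \leq n$ places us in the near-critical regime where $p$-open and $p_c$-open paths at scale $L(p)$ are comparable by definition of $L(p)$, FKG and standard arm-event estimates bound $\mathbb{P}(B_e)$ above by $C \pi_4(L(p))$, the critical 4-arm probability at scale $L(p)$. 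The scaling relation then gives
\[
\mathbb{P}(A_n(p,e)) \leq \frac{C(p-p_c)\,L(p)^2 \pi_4(L(p))}{L(p)^2} \leq \frac{C}{L(p)^2},
\]
and summing over the $O(n^2)$ edges of $R(n)$ yields $\mathbb{E}[N_n(p)] \leq C (n/L(p))^2$.

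To upgrade this to the exponential tail, I would bound higher moments. For distinct edges $e_1,\ldots,e_k \subset R(n)$, a Reimer (BK-type) inequality applied to the disjoint arm configurations emanating from the $e_i$, combined with quasi-multiplicativity \eqref{eq: QM}, yields
\[
\mathbb{P}\Big( \bigcap_{i=1}^{k} A_n(p, e_i) \Big) \leq \prod_{i=1}^{k} \frac{C}{L(p)^2}
\]
whenever the $e_i$ are pairwise at distance at least $L(p)$; configurations with some pairs closer than $L(p)$ are handled by reducing the constraints at sub-$L(p)$ scales to standard critical arm estimates and absorbing the overcounting into a combinatorial factor of order $k^k$. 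Summing gives $\mathbb{E}[N_n(p)^k] \leq (Ck)^k (n/L(p))^{2k}$, hence $\mathbb{E}[\exp(s N_n(p)/(n/L(p))^2)] \leq C'$ for some $s>0$, and Markov's inequality produces the claimed tail bound.

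The main technical obstacle is the higher-moment step: the joint bound requires a careful separation-of-arms argument to ensure that the 4-arm configurations at distinct $e_i$'s can be treated as essentially independent above the scale $L(p)$, while at sub-$L(p)$ scales the problem reduces to standard critical arm-event combinatorics. Coupling quasi-multiplicativity with RSW-type estimates in the mixed $(p, p_c)$ coloring, and tracking the strip boundary of $S(n)$, is precisely the content of the arm-event moment estimates in \cite{kiss}, which I would adapt here.
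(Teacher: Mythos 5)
Your skeleton is the same as the paper's: first moment of order $(n/L(p))^2$ via the relation $(p-p_c)L(p)^2\pi_4(L(p))\asymp 1$, higher moments controlled through quasi-multiplicativity of near-critical arm events, then an exponential Chebyshev/Markov step; the paper, too, obtains all of this by adapting Kiss. But two steps in your sketch are not routine deferrals -- they are where the lemma is actually proved, and as written they do not go through. First, Reimer/BK cannot be applied as you state to get the product bound for well-separated edges: the intersection $\bigcap_i A_n(p,e_i)$ does not entail disjoint occurrence, because the open arms (and likewise the closed dual arms) emanating from distinct $e_i$ may share edges. The correct mechanism is independence of events measurable with respect to disjoint regions: one truncates the arm event at each $e_i$ at (roughly half) its distance to the nearest other marked edge and at $L(p)$, and organizes the marked edges by the greedy box-growing (tree/blob) construction so that the relevant annular regions are disjoint; together with quasi-multiplicativity this is what produces the inductive bound \eqref{eq: inductive_bound}. (Where Reimer genuinely enters is elsewhere: see below.)

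Second, the clustered configurations are the heart of the matter, and ``absorbing the overcounting into a factor $k^k$'' is an assertion, not an argument. After the tree decomposition one must sum over positions of edges at sub-$L(p)$ separations, and the decisive estimate is $\sum_{r\le L(p)} r\,\pi_4(r)\le C\,L(p)^2\pi_4(L(p))$, which requires $\pi_4(r)/\pi_4(k)\le C(k/r)^{\alpha}$ with some $\alpha<2$; this is exactly where Reimer's inequality and the exact value of the $5$-arm exponent are used. Without this input the close-pair contributions could a priori swamp the bound and your claimed moment estimate $\mathbb{E}[N_n(p)^k]\le (Ck)^k(n/L(p))^{2k}$ is unsupported (though, arithmetically, such a bound would indeed suffice for the exponential tail, since $k^k/k!\le e^k$; the paper instead bounds the binomial moments $\mathbb{E}\binom{|V_n|}{k}\le (C(n/L(p))^2)^k$ with no factorial factor, which makes the generating-function step immediate). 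Relatedly, your uniform one-edge bound $\mathbb{P}(A_n(p,e))\le C\pi_4(L(p))$ is not automatic for edges near the top or bottom of $S(n)$, where one prescribed dual arm is short: there you need the half-plane $3$-arm estimate $\pi_3^H(m_1,m_2)\le C(m_1/m_2)^2$ compared against $\pi_4(m_1,m_2)\ge c(m_1/m_2)^2$, as in the derivation of \eqref{eq: four_arm_comparison}. These are the pieces you would have to supply to turn the plan into a proof.
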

\begin{proof}
We follow the argument of Kiss \cite[Sections~2, 3]{kiss} and much of what follows is copied from there. Since the proof is similar to that of \cite[Eq.~(3.11)]{kiss}, we just outline the main modifications necessary. For $n \geq 1$ and $p > p_c$ with $L(p) \leq n$ given, let $V_n$ be the set of edges $e$ in $R(n)$ such that $A_n(p,e)$ occurs. Let $k \in \mathbb{N}$ and
\[
X = \{e_1, \ldots, e_k\} \subset R(n).
\]
We give a bound on the probability of the event $\{V_n \supseteq X\}$, but first some definitions. Let $T_0$ denote the empty graph on $X$. Let us start blowing a box at each edge $e \in X$ at unit speed (starting at the midpoint of the edge). That is, at time $t \geq 0$, we have the boxes $B_t(e) = B(e,t),~e \in X$. We will stop at time $t=2n$, and at this time, all boxes touch.

For small values of $t$, these boxes are pairwise disjoint. As $t$ increases, more and more of these boxes intersect each other. Let $r_1$ denote the smallest $t$ when the first pair of boxes touch. We pick one such pair of boxes in some deterministic way, with central edges $e_1,f_1 \in X$. We draw an edge $\hat e_1$ between $e_1$ and $f_1$ and label it with $l(\hat e_1) := r_1$, and get the graph $T_1$. Note that $dist(e_1,f_1) = 2r_1$. (Here, $dist$ refers to the $\ell_\infty$ distance between the midpoints of the edges.) Then we continue with the growth process, and stop at time $r_2$ if we find a pair of edges $e_2,f_2 \in X$ such that $e_2,f_2$ are in different connected components of $T_1$ and $B_{r_2}(e_2)$ and $B_{r_2}(f_2)$ touch. Then we draw an edge $\hat e_2$ between one such deterministically chosen pair with the label $l(\hat e_2) := r_2$ and get $T_2$. Note that it can happen that $r_1=r_2$. We continue with this procedure until we arrive to the tree $T_{k-1}$. Let $\mathcal{R}(X)$ denote the multiset (a set where elements can appear multiple times) containing $r_i \leq 2n$ for $i=1, \ldots, k-1$.



The induction argument of \cite[Prop.~14]{KMS} implies the following product statement about our 4-arm events. To state it, we need to define a slightly modified 4-arm event. For $r \leq 2n$ and an edge $e \subset R(n)$, let $\hat\pi_4(p;e,r)$ be the probability that the following conditions hold:
\begin{enumerate}
\item $e$ is connected inside $S(n)$ to $\partial B(e,s_1)$ by two disjoint $p$-open paths, where
\[
s_1 = \min\left\{ L(p), r \right\},
\]
\item $e^*$ is connected inside $S(n)$ to $\partial B(e,s_2)$ by a $p_c$-closed dual path, where
\[
s_2 = \min\left\{ dist(e,\partial S(n)),r \right\},
\]
\item and $e^*$ is connected inside $S(n)$ to $\partial B(e,s_3)$ by another disjoint $p_c$-closed dual path, where
\[
s_3 = \min\{r,n\}.
\]
\end{enumerate}
Furthermore the paths in items 2 and 3 are alternating (open, closed, open). Set $\hat\pi_4(p;r) = \max_{e \subset R(n)} \hat \pi_4(p;e,r)$. Then for some $C_3$ independent of $n,k,p,$ and the $e_i$'s,
\begin{equation}\label{eq: inductive_bound}
\mathbb{P}(V_n \supseteq X) \leq C_3 (p-p_c)^k \hat \pi_4(p;n) \prod_{r \in \mathcal{R}(X)} \left( C_3\hat \pi_4(p;r) \right).
\end{equation}
The proof of this statement is similar to that of \cite[Prop.~2.2]{kiss}, and the main ingredient is that our connection probabilities $\hat \pi_4$ have a quasi-multiplicative property that holds for general arm events. (See the discussion around \eqref{eq: QM} above.)


Continuing from \eqref{eq: inductive_bound}, one can show that there is $C_6>0$ independent of $n,p,$ such that for $r \leq 2n$,
\begin{equation}\label{eq: four_arm_comparison}
\hat \pi_4(p;r) \leq C_6 \pi_4(s_1),
\end{equation}
where $\pi_4(s)$ is critical four-arm probability; that is, the probability of the event that the edge $f=\{0,\overrightarrow{e_1}\}$ has two disjoint $p_c$-open paths to distance $s$ (to $\partial B(f,s)$) and $f^*$ has two disjoint $p_c$-closed dual paths to distance $s$. To prove \eqref{eq: four_arm_comparison}, note that since $L(p) \leq n$, the event in $\hat \pi_4(p;e,r)$ for $e \subset R(n)$ implies that $e$ is connected to distance $s_1$ by two disjoint $p$-open paths, $e^*$ is connected to distance $s_1$ by one disjoint $p_c$-closed dual path, and $e^*$ is connected to distance $\min\{dist(e,\partial S(n)), s_1\}$ by another disjoint $p_c$-closed dual path (alternating). By independence, this probability is bounded by
\[
\pi_4'(p,\min\{dist(e,\partial S(n)),s_1\}) \pi_3^H(p,\min\{dist(e,\partial S(n)), s_1\}, s_1),
\]
where $\pi_4'(p,m)$ is the probability that $f$ is connected by two disjoint $p$-open paths to distance $m$ and $f^*$ is connected by two disjoint $p_c$-closed dual paths to distance $m$ (alternating), and $\pi_3^H(p,m_1,m_2)$ is the probability that $\partial B(m_1)$ is connected to $\partial B(m_2)$ in the upper half-plane by two disjoint $p$-open paths and a disjoint $p_c$-closed dual path (alternating). By \cite[Lemma~6.3]{DSV}, there is $D_1>0$ such that
\[
\pi_4'(p,\min\{dist(e,\partial S(n)), s_1\}) \leq D_1 \pi_4(\min\{dist(e,\partial S(n)), s_1\}).
\]
A similar argument as in \cite[Lemma~6.3]{DSV} also holds for half-plane 3-arm (annulus) events, and we find
\[
\pi_3^H(p,\min\{dist(e,\partial S(n)), s_1\},s_1) \leq D_2 \pi_3^H(\min\{dist(e,\partial S(n)),s_1\},s_1),
\]
where $\pi_3^H(m_1,m_2)$ is the probability that $\partial B(m_1)$ is connected by two disjoint $p_c$-open paths and a $p_c$-closed dual path to $\partial B(m_2)$ (alternating). Using \cite[Theorem~24 (2)]{nolin} and quasi-multiplicativity, one has for some $D_3$, $\pi_3^H(m_1,m_2) \leq D_3(m_1/m_2)^2$, and by quasi-multiplicativity and \cite[Theorem~24 (3)]{nolin}, one has $\pi_4(m_1,m_2) \geq D_4(m_1/m_2)^2$. In total, we can bound $\pi_3^H(p,m_1,m_2)$ above by a multiple of $\pi_4(m_1,m_2)$, giving uniformly in $e$, a constant $D_5$ such that
\[
\pi_4(p;e,r) \leq D_5 \pi_4(\min\{dist(e,\partial S(n)),s_1\}) \pi_4(\min\{dist(e,\partial S(n)), s_1\}, s_1),
\]
which by quasi-multiplicativity is bounded by $D_6\pi_4(s_1)$, showing \eqref{eq: four_arm_comparison}.

Given \eqref{eq: four_arm_comparison} and \eqref{eq: inductive_bound}, we obtain
\[
\mathbb{P}(V_n \supseteq X) \leq C_7(p-p_c)^k \pi_4(L(p)) \prod_{r \in \mathcal{R}(X)} \left( C_7 \pi_4(\min\{L(p),r\}) \right).
\]

To give a bound on moments of $N_n(p) = \#V_n$, we need to bound the number of sets $X$ such that $\mathcal{R}(X) = R$ for a given $R$. By arguments analogous to the proof of  \cite[Prop.~15]{KMS} we get the following. There is a universal constant $C_8$ such that for all multisets $R$ with $k-1$ elements, we have
\[
\#\{X \subset R(n) : |X| = k,~\mathcal{R}(X) = R\} \leq C_8 \mathcal{O}(R) n^2 \prod_{r \in R} (C_8 r),
\]
where $\mathcal{O}(R)$ denotes the number of different ways the elements of $R$ can be ordered.

Now compute
\begin{align*}
\mathbb{E}\binom{|V_n|}{k} &= \sum_{X \subseteq R(n)} \mathbb{P}(V_n \supseteq X) \\
&= \sum_{X \subseteq R(n)} \sum_R \mathbb{P}(V_n \supseteq X) \mathbf{1}_{\mathcal{R}(X) = R} \\
&\leq \sum_R \left[C_8 \mathcal{O}(R) n^2 C_7(p-p_c)^k \pi_4(L(p)) \prod_{r \in R} \left( C_8C_7 r \pi_4(\min\{L(p),r\}) \right)\right] \\
&\leq C_9^k n^2 (p-p_c)^k \pi_4(L(p)) \sum_R\left[ \mathcal{O}(R) \prod_{r \in R} r \pi_4(\min\{L(p),r\}) \right] \\
&= C_9^k n^2(p-p_c)^k \pi_4(L(p)) \sum_{\tilde R} \prod_{\tilde r \in \tilde R} \tilde r \pi_4(\min\{L(p),\tilde r\} \\
&= C_9^k n^2 (p-p_c)^k \pi_4(L(p)) \left( \sum_{r=1}^n r \pi_4(\min\{L(p),r\})\right)^{k-1},
\end{align*}
where $R$ is a multiset of with $k-1$ elements from the set $\{1/2, 1, \ldots, 2n\}$ and $\tilde R$ is a sequence of length $k-1$ from the set $\{1/2, 1, \ldots, 2n\}$. Last, we estimate
\[
\sum_{r=1}^n r \pi_4(\min\{L(p),r\}) \leq \sum_{r=1}^{L(p)} r \pi_4(r) + n^2 \pi_4(L(p)).
\]
For any $r \leq k$, one has $\frac{\pi_4(r)}{\pi_4(k)} \leq C_{12} (k/r)^\alpha$ for some $\alpha < 2$ (this follows from Reimer's inequality \cite{reimer} and the known value of the 5-arm exponent (from \cite[Theorem~24(3)]{nolin}, which references \cite[Lemma~5]{KSZ})), so
\begin{align*}
\sum_{r=1}^k r\pi_4(r) = \pi_4(k) \sum_{r=1}^k r \frac{\pi_4(r)}{\pi_4(k)} &\leq C_{12} \pi_4(k) \sum_{r=1}^k r (k/r)^{\alpha} \\
&= C_{12} k^{\alpha} \pi_4(k) \sum_{r=1}^k r^{1-\alpha} \\
&\leq C_{13} k^2 \pi_4(k).
\end{align*}
We thus obtain
\[
\sum_{r=1}^n r \pi_4(\min\{L(p),r\}) \leq C_{14}n^2 \pi_4(L(p)),
\]
and therefore
\[
\mathbb{E}\binom{|V_n|}{k} \leq \left( C_{15} n^2 (p-p_c) \pi_4(L(p))\right)^k.
\]
Since the product $L(p)^2 (p-p_c) \pi_4(L(p))$ is bounded uniformly in $p>p_c$ \cite[Prop~34]{nolin}, we finish with
\[
\mathbb{E}\binom{|V_n|}{k} \leq \left(C_{16} \frac{n}{L(p)}\right)^{2k}.
\]

Now we turn the above into a tail bound. For $t = 1+a > 1$,
\[
\mathbb{E}t^{|V_n|} = \sum_{k=1}^\infty (t-1)^k \mathbb{E}\binom{|V_n|}{k} \leq \sum_{k=1}^\infty \left( aC_{16} \left(\frac{n}{L(p)}\right)^2 \right)^k \leq 1,
\]
if $a$ is chosen to be $\left(2C_{16} \left(\frac{n}{L(p)}\right)^2\right)^{-1}$. This implies that for some $C_{17}>0$,
\[
\mathbb{E}\exp\left( C_{17}\frac{|V_n|}{\left(\frac{n}{L(p)}\right)^2} \right) \leq 1,
\]
and so by Markov,
\[
\mathbb{P}\left( N_n(p) \geq \lambda \left(\frac{n}{L(p)}\right)^2 \right) \leq e^{-C_{17}\lambda}.
\]
\end{proof}

\begin{cor}\label{cor: all_rectangles}
Given an integer $K \geq 2$, there exists $C>0$ such that for all $n$ and $p$ with $L(p) \leq n$,
\[
\mathbb{P}\left(T_K(n) \geq \lambda F^{-1}(p) \left( \frac{n}{L(p)} \right)^2 \right) \leq e^{-C \lambda} + \exp\left( -C \frac{n}{L(p)} \right) \text{ for } \lambda \geq 0,
\]
where $T_K(n)$ is the corresponding minimal passage time between the left and right sides of $[-Kn,Kn] \times [-n,n]$ among all paths that remain in this rectangle.
\end{cor}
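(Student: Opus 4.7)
The plan is to tile $[-Kn,Kn]\times[-n,n]$ by $O(K)$ overlapping shorter pieces on which Proposition~\ref{prop: rectangle_crossing} (or its immediate analog) applies, and to glue the resulting crossings using planarity.

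Set $a_i = -Kn + 2in$ for $i = 0,1,\ldots,K-2$ and $R_i = [a_i, a_i+4n] \times [-n,n]$. Each $R_i$ is a horizontal translate of $R(n)$ and $\bigcup_i R_i = [-Kn,Kn]\times[-n,n]$. Consecutive $R_i, R_{i+1}$ overlap in a $2n\times 2n$ square $Q_i = [a_{i+1}, a_i+4n]\times[-n,n]$, $i=0,\ldots,K-3$. By translation invariance of Proposition~\ref{prop: rectangle_crossing}, for each $i$ the minimal passage time $T^{(i)}$ of a left-right crossing of $R_i$ (staying in $R_i$) obeys the same tail bound as $T(n)$. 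An analogous bound holds for the minimal top-bottom crossing time $T_v^{(i)}$ of $Q_i$: one reruns the proof of Proposition~\ref{prop: rectangle_crossing} on $Q_i$, using an RSW-type estimate for $p$-open top-bottom crossings of $Q_i$ and the natural adaptations of Lemmas~\ref{lem: T_p} and~\ref{lem: kiss_lem} to the square geometry (the moment bound of Lemma~\ref{lem: kiss_lem} scales with the area of the region and relies only on quasi-multiplicativity of arm events, both of which transfer verbatim).

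Fix paths $\pi_i,\sigma_i$ realizing $T^{(i)},T_v^{(i)}$. In each overlap square $Q_i$, both $\pi_i$ and $\pi_{i+1}$ contain sub-paths that cross $Q_i$ horizontally while $\sigma_i$ crosses $Q_i$ vertically, so planarity inside $Q_i$ forces $\pi_i \cap \sigma_i \neq \emptyset$ and $\pi_{i+1} \cap \sigma_i \neq \emptyset$. Concatenating appropriate sub-paths of the $\pi_i$'s and $\sigma_i$'s through these intersections produces a left-right crossing of $[-Kn,Kn]\times[-n,n]$ staying in this rectangle, with passage time at most $\sum_{i=0}^{K-2} T^{(i)} + \sum_{i=0}^{K-3} T_v^{(i)}$, a sum of $2K-3$ terms. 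Setting $\lambda_0 = \lambda/(2K-3)$ and applying a union bound over these $2K-3$ summands yields
\[
\mathbb{P}\!\left(T_K(n) \geq \lambda F^{-1}(p)\left(\tfrac{n}{L(p)}\right)^{2}\right) \leq (2K-3)\bigl[e^{-C\lambda/(2K-3)} + \exp(-Cn/L(p))\bigr],
\]
and absorbing the prefactor into a $K$-dependent constant in the exponents gives the claim.

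The step I expect to require the most care is the top-bottom analog of Proposition~\ref{prop: rectangle_crossing} for the square $Q_i$, simply because the proof as written is phrased for the $2{:}1$ rectangle $R(n)$ with its surrounding $8n\times 2n$ strip $S(n)$. No new ideas are needed---only a bookkeeping rerun with the square in place of $R(n)$ and a neighborhood of it (contained in $[-Kn,Kn]\times[-n,n]$) in place of $S(n)$---but this is where most of the actual writing for the corollary would go.
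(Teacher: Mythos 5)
Your decomposition and gluing are exactly the paper's: $K-1$ overlapping horizontal translates of $R(n)$, joined through the $2n$-wide overlap columns by vertical crossings, followed by a union bound over the $O(K)$ pieces. The one place you diverge is the shape of the vertical pieces, and it is precisely the place you flag as needing the most work. You take the vertical crossing regions to be the $2n\times 2n$ overlap squares $Q_i$, which forces you to rerun the proof of Proposition~\ref{prop: rectangle_crossing} in a square geometry (new RSW input, new enlarged region playing the role of $S(n)$, reworked arm-event bookkeeping). The paper instead takes the vertical pieces to be $2n\times 4n$ rectangles sitting over the overlap columns, i.e.\ exact $90^\circ$ rotations of $R(n)$, so Proposition~\ref{prop: rectangle_crossing} applies verbatim by translation and rotation invariance and no re-derivation is needed; the cost is only that these rectangles stick out above the strip $[-n,n]$, which is harmless because only the sub-segment of each vertical crossing lying inside the strip (between its intersections with the two adjacent horizontal crossings) is used in the concatenation. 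Your adaptation to squares would go through --- the argument for Proposition~\ref{prop: rectangle_crossing} is not sensitive to the $2{:}1$ aspect ratio --- but it is avoidable work, and if you were writing this out you should simply replace the $Q_i$ by the rotated copies of $R(n)$.
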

\begin{proof}
Let $\Gamma_1, \ldots, \Gamma_{K-1}$ be paths such that $\Gamma_i$ is in $[-Kn + 2(i-1)n,-Kn + 2(i+1)n] \times [-n,n]$, connects the left side of the rectangle to the right side, and has minimal passage time among all such paths. Let $\hat \Gamma_1, \ldots, \hat \Gamma_{K-2}$ be paths such that $\hat \Gamma_i$ is in $[-Kn + 2in, -Kn + 2(i+1)n] \times [-n,3n]$, connects the top side of the rectangle to the bottom side, and has minimal passage time among all such paths. By planarity, there is a path remaining in $[-Kn,Kn] \times [-n,n]$ which starts on the left side of this rectangle, ends on the right, and is contained in the union $\left(\cup_{i=1}^{K-1} \Gamma_i\right) \cup \left(\cup_{i=1}^{K-2} \hat \Gamma_i \right)$. Applying Proposition~\ref{prop: rectangle_crossing} and a union bound completes the proof.
\end{proof}

\subsection{Bounds for annulus crossing times}\label{sec: annulus_times}
Using the results of the last section, we will give our main bound on types of maximal annulus crossing times.

For any $n \geq 1$, $x,y \in B(3^{n+3})$, and $S$ a subset of the edges of $B(3^{n+3})$, define
\[
T^{(n)}(x,y,S) = \max \left\{ \sum_{e \in \Gamma \cap S} t_e : \Gamma \subset B(3^{n+3}),~ T(\Gamma) = T^{(n)}(x,y),~ \Gamma: x\to y \right\}.
\]
We will be concerned with a type of annulus-crossing time. For $n \geq 2$, define
\[
T_{max}(n) = \max_{x \in \partial B(3^n), y \in \partial B(3^{n+3})} T^{(n)}(x,y, Ann_{n+1}).
\]
(Here, we think of $Ann_{n+1}:=B(3^{n+2}) \setminus B(3^{n+1}))$ as an edge set by considering all edges with both endpoints in $Ann_{n+1}$.)

The main result is:
\begin{prop}\label{prop: upper_bound}
There exist $C,\mathbf{C}$ such that almost surely
\[
T_{max}(n) \leq C F^{-1}\left( q_n \right) \log^3 n \text{ for large }n,
\]
where $q_n = p_{\lfloor \frac{3^n}{\mathbf{C}\log n} \rfloor}$.
\end{prop}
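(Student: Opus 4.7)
The plan is to construct, with very high probability, a low-passage-time ``bypass'' network around $Ann_{n+1}$ consisting of two circuits around the origin (one in $Ann_n$, one in $Ann_{n+2}$) together with a single short radial connector, and then use planar separation plus minimality of $T^{(n)}$-geodesics to shortcut any such geodesic across $Ann_{n+1}$ by this bypass. Since every edge of the bypass will be controlled through Corollary~\ref{cor: all_rectangles}, the cost of the shortcut will be of order $F^{-1}(q_n)\log^3 n$.

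Setting $m_n = \lfloor 3^n/(\mathbf{C}\log n)\rfloor$ and $q_n = p_{m_n}$, \eqref{eq: L_p_n} gives $L(q_n)\asymp m_n$, so $3^n/L(q_n)\asymp \mathbf{C}\log n$. Tile each of $Ann_n$ and $Ann_{n+2}$ with four rectangles of aspect ratio $\asymp 3:1$ (one along each side of the annulus, with adjacent rectangles overlapping in a common square near each corner), and choose one further rectangle that traverses $Ann_{n+1}$ in the short (radial) direction and whose long sides meet the rectangles in $Ann_n$ and $Ann_{n+2}$. In each of these nine rectangles, pick a minimum-passage-time crossing of its long direction. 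Applying Corollary~\ref{cor: all_rectangles} with $K=3$, short side $\asymp 3^n$, $p=q_n$, and $\lambda = C_0\log n$ shows that each such crossing has passage time at most
\[
C_0\, F^{-1}(q_n)(\mathbf{C}\log n)^2\log n \;\leq\; C_1\, F^{-1}(q_n)\log^3 n
\]
off an event of probability $\leq n^{-CC_0} + n^{-C\mathbf{C}}$. Taking $\mathbf{C}$ and $C_0$ large enough and union-bounding over the nine rectangles gives a good event $G_n$ with $\mathbb{P}(G_n^c)\leq n^{-2}$. On $G_n$, the corner overlaps force (by planarity) the four rectangle crossings in $Ann_n$ to meet pairwise in the corner squares, so they can be stitched into a circuit $C_1$ around $0$ of total passage time $\leq 4C_1 F^{-1}(q_n)\log^3 n$; similarly for a circuit $C_2 \subset Ann_{n+2}$ around $0$, and the ninth crossing provides a connector $\pi$ joining $C_1$ to $C_2$ of the same order. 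Borel-Cantelli then ensures $G_n$ holds for all large $n$ almost surely.

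On $G_n$, let $\Gamma$ be any simple $T^{(n)}$-geodesic from $x\in \partial B(3^n)$ to $y\in \partial B(3^{n+3})$. Since $C_1 \subset B(3^{n+1})$ encircles $B(3^n)$, the bounded component of $\mathbb{R}^2\setminus C_1$ is contained in $B(3^{n+1})$ and hence disjoint from $Ann_{n+1}$; since $C_2$ encircles $B(3^{n+2})\supset Ann_{n+1}$, the unbounded component of $\mathbb{R}^2\setminus C_2$ is also disjoint from $Ann_{n+1}$. Let $a$ be the first vertex of $\Gamma$ on $C_1$ and $b$ the last vertex of $\Gamma$ on $C_2$. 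By Jordan separation, $\Gamma|_{[x,a]}$ stays in the bounded component of $\mathbb{R}^2\setminus C_1$ and $\Gamma|_{[b,y]}$ stays in the unbounded component of $\mathbb{R}^2\setminus C_2$, so $\Gamma\cap Ann_{n+1}\subset \Gamma|_{[a,b]}$. Let $\beta$ be the bypass path: an arc of $C_1$ from $a$ to one endpoint of $\pi$, then $\pi$, then an arc of $C_2$ to $b$. Substituting $\beta$ for $\Gamma|_{[a,b]}$ and extracting a simple path from the resulting walk, the minimality of $\Gamma$ yields
\[
T(\Gamma|_{[a,b]})\;\leq\; T(\beta)\;\leq\; T(C_1)+T(\pi)+T(C_2)\;\leq\; C\,F^{-1}(q_n)\log^3 n,
\]
and therefore $\sum_{e\in \Gamma\cap Ann_{n+1}} t_e \leq T(\Gamma|_{[a,b]}) \leq C\,F^{-1}(q_n)\log^3 n$, uniformly in $x,y,\Gamma$. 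The main technical obstacle is this stitching step: one must choose the rectangles so that their corner overlaps force the minimum-passage-time crossings to pass through each overlap, so that they concatenate into honest circuits without inflating the passage time by more than a constant factor. Once this is done, the rest of the argument is a straightforward combination of Corollary~\ref{cor: all_rectangles}, Borel-Cantelli, and the Jordan curve theorem.
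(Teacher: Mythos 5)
Your proposal is correct and follows essentially the same route as the paper's proof: both build two circuits (in $Ann_n$ and $Ann_{n+2}$) plus a radial connector out of nine minimal-passage-time rectangle crossings, control each crossing via Corollary~\ref{cor: all_rectangles} with $p=q_n$, $\lambda\asymp\log n$ and \eqref{eq: L_p_n}, apply a union bound and Borel--Cantelli, and then use planar separation together with geodesic minimality to bound the $Ann_{n+1}$-portion of any $T^{(n)}$-geodesic by the passage time of this bypass. The only difference is cosmetic: you phrase the shortcut via the first hit of $C_1$ and last hit of $C_2$ with an explicit Jordan-curve argument, while the paper uses the first and last intersections of the geodesic with the union $S_n$ of the nine crossings.
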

\begin{proof}
We will build two circuits around the origin -- one in $Ann_n$ and one in $Ann_{n+2}$, and a path connecting them, using only crossings in the ``long direction'' of minimal passage time of translates and rotates of rectangles of the form $[-3^{n+3},3^{n+3}]\times [-3^n,3^n]$ (they start on the left side, end on the right, and remain in the rectangle). One can do this using 9 such crossings. (Refer back to Figure~\ref{fig: a_s_event} for a similar construction. One uses 8 such crossings to build the circuits, and one to build the path connecting them.) So letting $S_n$ be the union of all the edges in these crossings, a union bound along with Corollary~\ref{cor: all_rectangles} shows for some $C, \mathbf{C}$ large enough and all $n$,
\[
\mathbb{P}\left(\sum_{e \in S_n} t_e \geq C F^{-1}(q_n) \log^3(n) \right) \leq n^{-2}.
\]
In deriving this, one needs to use \eqref{eq: L_p_n} applied to $L(q_n)$. Borel-Cantelli implies that almost surely,
\[
\sum_{e \in S_n}t_e < C F^{-1}(q_n) \log^3 n \text{ for all large }n.
\]


If $x \in \partial B(3^n)$ and $y \in \partial B(3^{n+3})$, then let $\Gamma$ be a $T^{(n)}$-geodesic from $x$ to $y$. The path $\Gamma$ has a first intersection $z$ with $S_n$ (which must be in $Ann_n$) and a last intersection $w$ with $S_n$ (which must be in $Ann_{n+2}$). One then has
\[
T^{(n)}(x,y,Ann_{n+1}) \leq T^{(n)}(z,w) \leq \sum_{e \in S_n} t_e.
\]
This is true for all $x,y$, so almost surely, for all large $n$
\[
T_{max}(n) \leq \sum_{e \in S_n} t_e \leq CF^{-1}(q_n) \log^3n.
\]
\end{proof}

\subsection{Dimension bounds}\label{sec: dimension_bounds}

\subsubsection{Tools from Aizenman-Burchard}
In this section, we recall and use some results from Aizenman-Burchard. To do this, we will think of the box $B(3^{n+3})$ scaled down to unit size; that is, to the box $B(1)$. The lattice spacing in $B(1)$ will be $1/3^{n+3}$, so that our geodesics are polygonal paths of step-size $1/3^{n+3}$. The lower bounds on dimension of random curves of \cite{AB} are derived using a truncated form of capacity.
\begin{defi}
For $s>0$ and $\ell\geq 0$, the capacity $\text{Cap}_{s;\ell}A$ of a subset $A$ of $\mathbb{R}^d$ is
\[
\frac{1}{\text{Cap}_{s;\ell}A} = \inf_{\mu \geq 0 : \int_A \text{d}\mu = 1} \int \int_{A \times A} \frac{\mu(\text{d}x)\mu(\text{d}y)}{\max\{|x-y|,\ell\}^s}.
\]
\end{defi}
(The standard definition of capacity does not include the term $\ell$ in the denominator, but this term helps to deal with the fact that our paths have stepsize $>0$.) We will take $A$ to be a geodesic $\Gamma$ in $B(1)$ from $x \in B(1/27)$ to $y \in \partial B(1)$. The relationship between the length of $\Gamma$ and its capacity is given by the following lemma.
\begin{lem}\label{lem: length_lemma}
For every collection of sets $\{B_j\}$ covering $A$ with $\min_j \text{diam}(B_j) \geq \ell$,
\[
\sum_j (\text{diam}~B_j)^s \geq \text{Cap}_{s;\ell}A.
\]
\end{lem}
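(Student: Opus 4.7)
The plan is to establish a standard Frostman-type inequality: use the cover to construct, for any admissible measure $\mu$, a lower bound on the energy integral appearing in the definition of capacity, and then take the infimum over $\mu$.

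First, I would disjointify the cover. Given $\{B_j\}$ covering $A$ with $\min_j \mathrm{diam}(B_j) \geq \ell$, set $B_j' = B_j \setminus \bigcup_{i<j} B_i$ (with respect to some arbitrary fixed ordering of the index set). Then the $\{B_j'\}_{j}$ are pairwise disjoint, still cover $A$, and satisfy $\mathrm{diam}(B_j') \leq \mathrm{diam}(B_j)$. The key observation is that for any $x,y \in B_j'$,
\[
\max\{|x-y|,\ell\} \leq \max\{\mathrm{diam}(B_j'),\ell\} \leq \mathrm{diam}(B_j),
\]
since by hypothesis $\mathrm{diam}(B_j) \geq \ell$. (This is precisely where the assumption $\mathrm{diam}(B_j)\geq \ell$ is used.)

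Next, fix any probability measure $\mu$ supported on $A$. Since the sets $B_j' \times B_j'$ are pairwise disjoint subsets of $A\times A$ and the integrand is nonnegative,
\[
\int\!\!\int_{A\times A} \frac{\mu(\mathrm{d}x)\,\mu(\mathrm{d}y)}{\max\{|x-y|,\ell\}^s} \geq \sum_j \int\!\!\int_{B_j'\times B_j'}\frac{\mu(\mathrm{d}x)\,\mu(\mathrm{d}y)}{\max\{|x-y|,\ell\}^s} \geq \sum_j \frac{\mu(B_j')^2}{\mathrm{diam}(B_j)^s}.
\]
Then apply Cauchy--Schwarz in the form $\sum_j a_j^2/b_j \geq (\sum_j a_j)^2/\sum_j b_j$ with $a_j = \mu(B_j')$ and $b_j = \mathrm{diam}(B_j)^s$. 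Since the $B_j'$ are disjoint and cover $A$, $\sum_j \mu(B_j') = \mu(A) = 1$, so the right-hand side is at least $1/\sum_j \mathrm{diam}(B_j)^s$.

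Taking the infimum over probability measures $\mu$ on $A$ gives $1/\mathrm{Cap}_{s;\ell}A \geq 1/\sum_j \mathrm{diam}(B_j)^s$, which is the desired inequality. There is no genuine obstacle here — the proof is a two-step pigeonhole/Cauchy--Schwarz argument, and the only subtle point is the disjointification step that allows the hypothesis $\mathrm{diam}(B_j) \geq \ell$ to be used to compare $\max\{|x-y|,\ell\}$ with $\mathrm{diam}(B_j)$ uniformly on each piece.
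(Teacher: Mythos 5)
Your proof is correct, and it is the standard Frostman/Cauchy--Schwarz argument; the paper itself states this lemma without proof as a fact imported from Aizenman--Burchard, and your argument is essentially the one found there (disjointify the cover, bound the kernel below by $\mathrm{diam}(B_j)^{-s}$ on each piece using $\mathrm{diam}(B_j)\geq\ell$, then apply $\sum_j a_j^2/b_j \geq (\sum_j a_j)^2/\sum_j b_j$). The only cosmetic point is that one should work with $B_j'\cap A$ so that the disjoint pieces genuinely partition $A$ and sum to $\mu(A)=1$; this changes nothing.
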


Taking $\{B_j\}$ to be a collection of boxes of size $C/3^n$ centered on the edges of $\Gamma$, we then obtain
\begin{equation}\label{eq: capacity_length_bound}
\frac{\#\Gamma C^s}{3^{ns}} = \sum_j (\text{diam}~B_j)^s \geq \text{Cap}_{s;\ell}\Gamma.
\end{equation}
Therefore Theorem~\ref{thm: length_thm} follows directly from this inequality and the following proposition, which we will show in Section~\ref{sec: main_proof}:
\begin{prop}\label{prop: capacity_bound}
There exist $C_1,C_2,C_3>0$ and $s > 1$ such that the following holds for all large $n$. For all $T^{(n)}$-geodesics $\Gamma$ in $B(1)$ connecting a point in $B(1/27)$ to a point in $\partial B(1)$,
\[
\text{Cap}_{s;\ell}\Gamma' \geq C_1 \exp(-C_2 \log^4 n),
\]
where $\ell = \ell(n)$ satisfies $1/3^{n+3} \leq \ell \leq C_3/3^{n+3}$ and $\Gamma'$ is the portion of $\Gamma$ from its last intersection with $B(1/9)$ to its first intersection with $\partial B(1/3)$.
\end{prop}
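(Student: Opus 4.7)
The plan is to combine three ingredients already set up in the outline: the almost-sure upper bound $T_{max}(n)\le CF^{-1}(q_n)\log^3 n$ from Proposition~\ref{prop: upper_bound}, the straight-run lower bound for passage times (Proposition~\ref{prop: straight_runs}), and the Aizenman-Burchard capacity estimate (Proposition~\ref{prop: tree}). The key dichotomy is that the geodesic $\Gamma'$ either has \emph{sparse} straight runs across the multi-scale hierarchy of thin cylinders, in which case the Aizenman-Burchard estimate gives large capacity directly, or it has \emph{dense} straight runs, in which case the cylinders it crosses force $T(\Gamma')$ to exceed the a.s.\ upper bound, a contradiction.

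Concretely, first rescale so $B(3^{n+3})$ becomes $B(1)$, fix a large constant $\gamma$, and consider the scales $L_k=\gamma^{-k}$ for $k=0,1,\dots,K$ with $K=\lfloor \log_\gamma 3^n\rfloor$. A straight run at scale $k$ is a crossing by $\Gamma'$ of some thin rectangle of width $L_k$ in its long direction. The subpath $\Gamma'$ lies in the rescaled annulus $B(1/3)\setminus B(1/9)$, which corresponds to $Ann_{n+1}$ in the original scale; hence on the a.s.\ event from Proposition~\ref{prop: upper_bound},
\[
T(\Gamma')\le T_{max}(n)\le CF^{-1}(q_n)\log^3 n \quad \text{for all large } n.
\]
Next, Proposition~\ref{prop: straight_runs} guarantees, on another a.s.\ event for large $n$, that any path crossing $Ann_{n+1}$ with straight runs at more than $K/2$ of the scales $\{L_k\}$ accumulates passage time at least $(1/8)(\log^4 n)F^{-1}(q_n)$. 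Since $\log^4 n$ eventually dominates $8C\log^3 n$, on the intersection of these a.s.\ events $\Gamma'$ can have straight runs at no more than half of the scales. Feeding this sparseness into Proposition~\ref{prop: tree} produces the asserted lower bound $\text{Cap}_{s;\ell}\Gamma'\ge C_1\exp(-C_2\log^4 n)$ for a suitable $s>1$ and $\ell$ of order $1/3^{n+3}$.

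The main obstacle is that in FPP, unlike the pure percolation setting of \cite{AB}, straight runs by geodesics through disjoint cylinders are \emph{not} approximately independent, so one cannot hope to upper-bound the probability of many straight runs directly. The workaround built into Proposition~\ref{prop: straight_runs} is to convert each straight run into the high-probability statement that its cylinder is ``slow'' (crossed transversally by four well-separated $p_m$-closed dual arms at an appropriately chosen near-critical level), and then to lower-bound deterministically the passage time of a path that must cross many such slow cylinders. Balancing the choice of $q_n$, the levels $p_m$, and the scales $L_k$ so that the resulting $\log^4 n$ comfortably beats the $\log^3 n$ from Proposition~\ref{prop: upper_bound}, while leaving room to absorb union bounds over the cylinders at each of the $K\sim\log n$ scales and a Borel--Cantelli in $n$, is the quantitatively delicate part and is what drives the precise exponent in the capacity bound.
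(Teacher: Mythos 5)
Your proposal is correct and follows essentially the same route as the paper: the paper's proof of Proposition~\ref{prop: capacity_bound} likewise rescales to $B(1)$, invokes Proposition~\ref{prop: straight_runs} to get $(\gamma,\lceil\log^4 n\rceil)$-sparse straight runs for $\Gamma'$, and plugs this into Proposition~\ref{prop: tree} with $\gamma^s=\frac{\sqrt{m(m+1)}+\gamma}{2}>\gamma$ to extract $s>1$ and the $\exp(-C_2\log^4 n)$ lower bound from the $\gamma^{s\lceil\log^4 n\rceil}$ term. The only organizational difference is that the dichotomy you describe (dense straight runs forcing $T(\Gamma')\geq\tfrac18(\log^4 n)F^{-1}(q_n)$, contradicting Proposition~\ref{prop: upper_bound}) is already internal to the paper's proof of Proposition~\ref{prop: straight_runs}, so it need not be repeated here.
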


\subsubsection{Geodesics have sparse straight runs}\label{sec: straight_runs}
Aizenman-Burchard gave a general theorem to lower bound the capacity of curves using the idea of ``straight runs.'' Roughly speaking, if a path does not cross too many thin cylinders (on successively decreasing scales) in the long direction, then its capacity, and therefore length, is large. We begin by recalling the definition of sparse straight runs.

Given $\gamma>1$ (which will be taken to be large), define a sequence of successive scales $L_k$ by
\[
L_k = \gamma^{-k}, \text{ for } k = 0, \ldots, k_{max},
\]
where $k_{max} = k_{max}(\gamma,n)$ is chosen so that $L_{k_{max}}$ is of order $1/3^n$. That is, we set $k_{max}$ as
\[
k_{max} = \max\{k \geq 0 : L_k \geq 1/3^{n+3}\}.
\]

\begin{defi}
A path $\Gamma$ in $B(1)$ is said to exhibit a straight run at scale $L$ ($= L_k$ for some $k$) if it traverses some cylinder of length $L$ and cross sectional diameter $(9/\sqrt{\gamma})L$ in the ``length'' direction, joining the centers of the corresponding sides. Two straight runs are \emph{nested} if one of the defining cylinders contains the other.

For a given integer $k_0$ and $\gamma>1$, we say that straight runs for $\Gamma$ are $(\gamma,k_0)$-sparse, down to the scale $\ell$, if $\Gamma$ does not exhibit any nested collection of straight runs on a sequence of scales $L_{k_1} > \cdots > L_{k_N}$ with $L_{k_N} \geq \ell$ and
\[
N \geq \frac{1}{2} \max\{k_N, k_0\}.
\]
\end{defi}

The next result says that to show our needed capacity bound for $T^{(n)}$-geodesics $\Gamma$, it suffices to prove that straight runs for $\Gamma$ are sparse. Note that in the next proposition, $\ell$ satisfies
\begin{equation}\label{eq: l_k_max}
1/3^{n+3} \leq \ell = L_{k_{max}} \leq C/3^{n+3}
\end{equation}
 for some $C = C(\gamma)$ independently of $n$, as required in Proposition~\ref{prop: capacity_bound}.
\begin{prop}\label{prop: tree}
Let $\Gamma$ be a path in $B(1)$, let $\gamma>1$ and set $m \in [\gamma/2,\gamma)$ with $\epsilon = \gamma - m$. If straight runs for $\Gamma$ are $(\gamma,k_0)$-sparse down to the scale $\ell = L_{k_{max}}$, then for $s>0$ such that $\gamma^s < \beta : = \sqrt{m(m+1)}$, one has
\[
\text{Cap}_{s;\ell}\Gamma \geq \epsilon^s \left[ \gamma^{sk_0} + \frac{\beta}{1-\beta^{-1}\gamma^s} \right]^{-1}.
\]
\end{prop}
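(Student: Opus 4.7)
The plan is to exhibit an explicit probability measure $\mu$ on $\Gamma$ whose $s$-energy
\[
\mathcal{E}(\mu) = \int\!\!\!\int \frac{d\mu(x)\,d\mu(y)}{\max\{|x-y|,\ell\}^s}
\]
is at most the reciprocal of the claimed capacity bound. Since $1/\text{Cap}_{s;\ell}\Gamma = \inf_\mu \mathcal{E}(\mu)$, it suffices to produce a single $\mu$ with $\mathcal{E}(\mu) \leq \epsilon^{-s}\bigl[\gamma^{sk_0} + \beta/(1-\beta^{-1}\gamma^s)\bigr]$. I would follow the standard Aizenman--Burchard multiplicative cascade strategy, placing mass on a nested hierarchy of cylinders at the scales $L_k = \gamma^{-k}$.

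First I would build, for each $k$, a cover $\mathcal{C}_k$ of $\Gamma$ by cylinders of length $L_k$ and cross-sectional diameter $(9/\sqrt{\gamma})L_k$, arranged so that each parent $C \in \mathcal{C}_k$ is subdivided along its length direction into $\gamma$ candidate slots of length $L_{k+1}$, and its children in $\mathcal{C}_{k+1}$ are those slots that meet $\Gamma$. The central geometric input is a \textbf{branching lemma}: every parent $C \in \mathcal{C}_k$ has at least $m$ children, and at least $m+1$ unless $\Gamma \cap C$ exhibits a straight run at scale $L_k$. The floor $m$ is obtained by choosing $m$ of the $\gamma$ possible slots to be occupied; the unused $\epsilon = \gamma - m$ slots ensure that distinct children are separated in the length direction by a gap of at least $\epsilon L_{k+1}$, which is exactly the source of the $\epsilon^s$ factor. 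The $m+1$ bound uses the definition of a straight run: if $\Gamma \cap C$ were covered by $m$ children, it would lie in a sub-cylinder whose cross-section matches the $(9/\sqrt{\gamma})L_k$ required to constitute a straight run.

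Next I would translate the $(\gamma,k_0)$-sparsity hypothesis into a cascade branching bound. Along any chain $C_0 \supset C_1 \supset \dots \supset C_{k_N}$ of nested cylinders with $k_N \geq k_0$, sparsity forces at most $k_N/2$ of the levels to carry a straight run in the corresponding parent. Combined with the branching lemma, the product of the branching factors along this chain is at least $m^{k_N/2}(m+1)^{k_N/2} = \beta^{k_N}$. I then define $\mu$ by concentrating mass $1$ on a single chosen cylinder at scale $L_{k_0}$ (no subdivision for $k \leq k_0$) and, from level $k_0$ onward, splitting mass uniformly among children at each step; this gives mass at most $\beta^{-(k-k_0)}$ to any cylinder at level $k \geq k_0$. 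Decomposing the energy according to the first level $k$ at which $x$ and $y$ lie in distinct cylinders, and using $|x-y| \geq \epsilon L_k$ for such pairs, yields
\[
\mathcal{E}(\mu) \leq (\epsilon L_{k_0})^{-s} + \sum_{k > k_0}(\epsilon L_k)^{-s}\beta^{-(k-k_0)} = \epsilon^{-s}\gamma^{sk_0} + \epsilon^{-s}\sum_{j \geq 1}(\gamma^s/\beta)^j \cdot \gamma^{sk_0}\beta.
\]
Under $\gamma^s < \beta$, the geometric series converges to $\beta/(1 - \beta^{-1}\gamma^s)$, giving the claimed bound after inversion.

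The main obstacle is Step 2, the branching lemma. Its proof requires setting up the cylinder covers so that three matching conditions hold simultaneously: children are genuinely nested inside their parent (and remain proper cylinders of the prescribed dimensions after subdivision), the unused-slot count translates into a minimum separation of at least $\epsilon L_k$ between distinct children (this is what produces the $\epsilon^s$ factor and must be done uniformly across scales and across chains), and the specific choice of cross-sectional diameter $(9/\sqrt{\gamma})L_k$ in the definition of a straight run exactly matches the geometric configuration that arises when $\Gamma \cap C$ meets only $m$ children. Once this is arranged, the remaining steps are essentially the bookkeeping of a geometric series and do not require any probabilistic input.
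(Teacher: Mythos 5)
The paper does not reprove this proposition---it is quoted directly from Aizenman--Burchard (their Eq.~(5.14))---so the only question is whether your reconstruction of their argument is sound, and it is not: the ``branching lemma'' that you yourself single out as the main obstacle is misconceived, not merely unproven. Your children are the longitudinal slots of a parent cylinder that meet $\Gamma$, and you claim that meeting only $m$ of the $\gamma$ slots forces $\Gamma\cap C$ into a thin sub-cylinder, i.e.\ forces a straight run. But slot-counting only constrains the \emph{longitudinal} extent of $\Gamma\cap C$, while a straight run is a statement of \emph{transverse} confinement (traversal within cross-section $(9/\sqrt{\gamma})L_k$); indeed a straight run makes the curve cross \emph{all} $\gamma$ slots, so if anything the implication you need runs the other way. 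In Aizenman--Burchard the extra $(m+1)$-st child comes from a transverse excursion that must exist when there is \emph{no} straight run, and the hierarchy is built from segments of the curve between suitably chosen points on it, not from a subdivision of a covering cylinder: note also that cutting an $L_k\times(9/\sqrt{\gamma})L_k$ cylinder into $\gamma$ slots produces pieces of size $L_{k+1}\times 9\sqrt{\gamma}\,L_{k+1}$, which are not cylinders of the next scale's shape, so your cascade does not even iterate. Likewise, the ``unused $\epsilon$ slots'' give no separation at all, since nothing prevents the occupied slots from being adjacent; the $\epsilon^s$ factor has to come from a genuine pairwise-separation statement about points selected on the curve.

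There is also a quantitative gap in the translation of sparsity into mass decay. With your normalization (unit mass on one cylinder at level $k_0$, splitting only below it), sparsity does not give $\mu(C)\le\beta^{-(k-k_0)}$: the definition only forbids more than $\tfrac12\max\{k,k_0\}$ straight-run levels among \emph{all} levels $0,\dots,k$, so for instance every level in $(k_0,2k_0]$ may carry a straight run, leaving you with only $m^{-(k-k_0)}$ there; since the hypothesis is $\gamma^s<\beta=\sqrt{m(m+1)}$ and not $\gamma^s<m$ (and in the paper's application $\gamma^s>m$), your geometric series then fails to close. The correct accounting branches from level $0$, uses branching $\ge m$ always and $\ge m+1$ at more than half the levels once $k\ge k_0$, and lets the term $\gamma^{sk_0}$ absorb the uncontrolled first $k_0$ levels. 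Finally, your last display does not evaluate to the stated bound ($\sum_{j\ge1}(\gamma^s/\beta)^j=\gamma^s/(\beta-\gamma^s)$, not $\beta/(1-\beta^{-1}\gamma^s)$, and the stray factor $\gamma^{sk_0}\beta$ is spurious); even corrected, the series you set up yields $\epsilon^{-s}\gamma^{sk_0}\beta/(\beta-\gamma^s)$, which is weaker than the stated inequality for large $k_0$ (though it would still suffice for the application in Section~\ref{sec: main_proof}). As written, then, the proposal does not prove the proposition; if you want a self-contained proof you should reproduce the Aizenman--Burchard segment-selection lemma rather than the slot dichotomy.
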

\begin{proof}
This is the bound \cite[Eq.~(5.14)]{AB} applied in our context. (See also a similar bound and explanation above \cite[Eq.~(5.22)]{AB} with the same choice of $\beta$.)
\end{proof}

We now address sparsity of straight runs for geodesics. In the next section we use the above proposition to show Proposition~\ref{prop: capacity_bound} and conclude Theorem~\ref{thm: length_thm}. To show that geodesics must leave cylinders, we will show that many cylinders are slow in the following sense:
\begin{defi}
For $a \in (0,1)$ and $L \leq 1$, an $L \times aL$ cylinder in $B(1)$ is said to be \emph{slow} if it is traversed in the $aL$-direction by two $p_{aL3^{n+3}}$-closed dual paths $P_1,P_2$ such that
\[
\min\{|x-y| : x \in P_1, y \in P_2\} \geq aL.
\]
A cylinder that is not slow is \emph{fast}.
\end{defi}
\begin{lem}\label{lem: AB_end}
For any large enough $\gamma>1$, the following occurs almost surely for all large $n$. One cannot find a nested collection of fast cylinders $R_1, \ldots, R_N$ at scales $L_{k_1} > \cdots > L_{k_N}$ with $k_N \leq k_{max}$ and
\[
N \geq \frac{1}{4} \max\{k_N,k_0\},
\]
where $k_0 = \lceil \log^4 n \rceil$.
\end{lem}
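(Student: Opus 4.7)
The plan is to show, for every fixed nested chain $R_1\supset\cdots\supset R_N$ at scales $L_{k_1}>\cdots>L_{k_N}\ge 1/3^{n+3}$, that $\mathbb{P}(\text{all }R_i\text{ fast})\le \eta(\gamma)^N$ with $\eta(\gamma)\to 0$ as $\gamma\to\infty$, then to union-bound over chains and conclude by Borel--Cantelli. Two ingredients are needed: a single-scale estimate of the slow probability, and a shell-type independence trick across nested scales.

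\emph{Single-scale estimate.} For a cylinder $R$ of scale $L_k$ with $k\le k_{\max}$, let $m=aL_k\cdot 3^{n+3}$ be the cross-sectional size in lattice units; by \eqref{eq: L_p_n} one has $L(p_m)\asymp m$. RSW at the correlation length then gives that any $\asymp m\times m$ sub-box of $R$ contains a $p_m$-closed dual crossing of $R$ in the short direction with uniformly positive probability $\delta>0$. Tile the length direction of $R$ (of lattice length $m\sqrt{\gamma}/9$) with $\Theta(\sqrt{\gamma})$ pairwise disjoint such sub-boxes, separated by gaps of width $\asymp m$. Because the sub-boxes are disjoint, these crossing events are independent, and a binomial tail bound shows that the probability that fewer than two of them succeed is $\le e^{-c\sqrt{\gamma}}$; on the complementary event, any two crossings in distinct sub-boxes are automatically separated by at least $aL_k$ along the length direction, so $R$ is slow. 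Hence $\mathbb{P}(R\text{ fast})\le e^{-c\sqrt{\gamma}}$.

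\emph{Multi-scale independence.} Since $R_{i+1}$ has length $L_{k_{i+1}}\le L_{k_i}/\gamma$, it occupies at most a $1/\gamma$-fraction of the length of $R_i$, so there is an ``end'' sub-cylinder $R_i^{\mathrm{end}}\subset R_i\setminus R_{i+1}$ with the same cross-section as $R_i$ and length $\ge L_{k_i}(\tfrac12-\tfrac1\gamma)$. Apply the single-scale construction inside $R_i^{\mathrm{end}}$ to obtain an event $A_i\subset\{R_i\text{ slow}\}$ measurable with respect to edges in $R_i\setminus R_{i+1}$. For nested chains the sets $R_i\setminus R_{i+1}$ are pairwise disjoint (since $R_j\subset R_{i+1}$ for all $j>i$), so the $A_i$ are independent and
\[
\mathbb{P}\Bigl(\bigcap_{i=1}^N\{R_i\text{ fast}\}\Bigr)\le \prod_{i=1}^N\mathbb{P}(A_i^c)\le e^{-cN\sqrt{\gamma}}.
\]

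\emph{Union bound and conclusion.} Discretizing cylinders at their native scale gives $O(\gamma^{2k})$ positions and orientations at scale $L_k$, so the number of nested chains with terminal scale $k_N$ and length $N$ is at most $\binom{k_N}{N-1}C^N\gamma^{2k_N}$. Combining with the previous bound and summing over $N\ge\tfrac14\max\{k_N,k_0\}$ yields a contribution of order $(2\gamma^2\,C^{1/4}e^{-c\sqrt{\gamma}/4})^{\max\{k_N,k_0\}}$ for each $k_N\le k_{\max}$. Taking $\gamma$ large enough that $2\gamma^2 C^{1/4}e^{-c\sqrt{\gamma}/4}<1$---possible since the exponential in $\sqrt{\gamma}$ beats any polynomial---makes the sum over $k_N$ bounded by $\rho^{k_0}=\rho^{\lceil\log^4 n\rceil}$ for some $\rho=\rho(\gamma)<1$, which is summable in $n$, and Borel--Cantelli closes the argument. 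The main obstacle is that a mere uniform positive lower bound on the slow probability does \emph{not} suffice against the $\gamma^{2k_N}$ combinatorial growth; the argument crucially uses the \emph{exponential} improvement $e^{-c\sqrt{\gamma}}$ coming from the long aspect ratio $\sqrt{\gamma}/9$ of the cylinders. A minor technical point is that at scales very close to $k_{\max}$ the quantity $aL_k 3^{n+3}$ can be of order 1; this is handled by restricting the shell argument to scales where $aL_k 3^{n+3}\ge 1$, which only omits a bounded number of scales near $k_{\max}$ and is absorbed into the constants.
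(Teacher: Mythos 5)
Your proposal is correct and follows essentially the same route as the paper's proof: a single-scale estimate via \eqref{eq: closed_dual_p_n} and \eqref{eq: L_p_n} showing a fixed discretized cylinder is slow except with probability $e^{-c\sqrt{\gamma}}$ (using $\Theta(\sqrt{\gamma})$ disjoint cross-sectional sub-boxes), independence across nested scales from the disjoint regions $R_i\setminus R_{i+1}$, and a union bound over the $\gamma^{O(k_N)}$ discretized positions and the choices of scales, finished by Borel--Cantelli with $k_0=\lceil\log^4 n\rceil$. The differences are cosmetic only: you demand two crossings with separation built in via gaps in the tiling where the paper demands four crossings from a maximal disjoint subdivision, and you work in an end sub-cylinder avoiding $R_{i+1}$ where the paper discards the at most two sub-cylinders meeting $R_{i+1}$.
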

\begin{proof}
We follow the proof in Aizenman-Burchard. We first give a bound on the probability that for any fixed sequence $k_1 < \cdots < k_N \leq k_{max}$ there is a sequence $R_1, \ldots, R_N$ of nested cylinders at scales $L_{k_1} > \cdots > L_{k_N}$ all of which are fast. Specifically, we will first show:
\begin{align}
\mathbb{P}(\text{there is a nested sequence of fast cylinders }&\text{at scales } L_{k_1}, \ldots, L_{k_N}) \nonumber \\
\leq &C_1 \gamma^{4k_N} \exp\left( - c N \sqrt{\gamma} \right) \label{eq: nested_bound}.
\end{align}

If an $L \times (9/\sqrt{\gamma})L$ cylinder is fast, then if $\gamma$ is large, (independent of $L$), we can find a cylinder of width $(10/\sqrt{\gamma})L$ and length $L/2$ centered at a line segment joining discretized points in $L'\mathbb{Z}^d$ (with $L' \leq L/\gamma$) that cannot be traversed in the $(10/\sqrt{\gamma})L$-direction by two disjoint $p_{(9/\sqrt{\gamma})L3^{n+3}}$-closed dual paths $P_1,P_2$ with $\min\{|x-y| : x \in P_1, y \in P_2\} \geq (9/\sqrt{\gamma})L$. As in \cite[Eq.~(6.2)]{AB}, the number of positions of $N$ nested cylinders at scales $L_{k_1}, \ldots, L_{k_N}$ is bounded by
\[
C_1 \gamma^{4k_1}\gamma^{4(k_2-k_1)} \cdots \gamma^{4(k_N-k_{N-1})} \leq C_1 \gamma^{4k_N}.
\]

Fix now such a sequence $R_i$, $i=1, \ldots, N$ of nested cylinders of length $L_{k_i}/2$ and width $(10/\sqrt{\gamma})L_{k_i}$. Cut each of the cylinders into $\lfloor \sqrt{\gamma}/18 \rfloor$ shorter cylinders of dimensions $(9/\sqrt{\gamma}) L_{k_i} \times (10/\sqrt{\gamma})L_{k_i}$ (plus a possible remaining one of smaller length which we do not consider) and pick a maximal number of disjoint cylinders from this collection. For $\gamma$ large, each $R_{i+1}$ intersects at most two of the shorter cylinders obtained by subdividing $R_i$, so the number of cylinders at scale $L_{k_i}$ in a maximal collection is at least $\lfloor \sqrt{\gamma}/18 \rfloor - 2$. By \eqref{eq: closed_dual_p_n}, the probability that a $(9/\sqrt{\gamma})L_{k_i} \times (10/\sqrt{\gamma})L_{k_i}$ cylinder is traversed in the $(10/\sqrt{\gamma})L_{k_i}$-direction by a $p_{(9/\sqrt{\gamma})L_{k_i}3^{n+3}}$-closed dual path is bounded below by some constant uniformly in $n, \gamma$, and the choice of the $k_i$'s. By standard large deviations for sums of Bernoulli random variables, there is a universal constant $c>0$ such that probability that at least four cylinders from the maximal collection at scale $L_{k_i}$ are traversed by such closed dual paths is at least $1-\exp(-c \sqrt{\gamma})$. If four distinct such cylinders have this property at scale $L_{k_i}$ and $\gamma$ is large, then the original cylinder $R_i$ is slow. These events are independent at distinct scales, so
\[
\mathbb{P}(R_1, \ldots, R_N \text{ are fast}) \leq \exp\left( -cN\sqrt{\gamma} \right).
\]
Summing over positions of the original cylinders gives the bound \eqref{eq: nested_bound}.

Now we sum \eqref{eq: nested_bound} over choices of cylinders to show the lemma. Namely, for a given $n$ and $k = k_N \leq k_{max}$, the probability that there is a nested sequence of fast cylinders $R_1, \ldots, R_N$ at some scales $L_{k_1} > \cdots > L_{k_N}$ with $k_N \geq N \geq k_N/4$ is bounded by
\[
\sum_{N=\lceil k/4 \rceil}^{k} \binom{k}{N} C_1 \gamma^{4k} \exp\left( - cN \sqrt{\gamma} \right) \leq C_1 k 2^k \gamma^{4k} \exp\left( - ck\sqrt{\gamma}/4 \right).
\]
Taking $\gamma$ large, this is bounded by $c_2 \exp\left( - c_2 k \right)$. Summing over $k = k_N \geq N \geq \frac{1}{4} \lceil \log^4 n \rceil$ gives a probability which is summable in $n$ and Borel-Cantelli finishes the proof.
\end{proof}

From the existence of many slow cylinders, we can prove that geodesics have sparse straight runs.
\begin{prop}\label{prop: straight_runs}
For any sufficiently large $\gamma>1$, almost surely, the following occurs for all large $n$. For any $T^{(n)}$-geodesic $\Gamma$ from a vertex $x \in B(1/27)$ to a vertex $y \in \partial B(1)$, $\Gamma'$ has $(\gamma,\lceil \log^4 n \rceil)$-sparse straight runs down to the scale $\ell = L_{k_{max}}$. Here $\Gamma'$ is the portion of $\Gamma$ from its last intersection with $B(1/9)$ to its first intersection with $\partial B(1/3)$.
\end{prop}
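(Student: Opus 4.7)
The plan is to argue by contradiction, using two almost sure inputs that hold for all sufficiently large $n$: the annulus passage time bound $T_{max}(n) \leq C F^{-1}(q_n)\log^3 n$ from Proposition~\ref{prop: upper_bound}, and the absence of long nested sequences of fast cylinders provided by Lemma~\ref{lem: AB_end}. I would work on the intersection of these events. Suppose, for some $T^{(n)}$-geodesic $\Gamma$ from $x \in B(1/27)$ to $y \in \partial B(1)$, that $\Gamma'$ exhibits a nested sequence of straight-run cylinders $R_1 \supset R_2 \supset \cdots \supset R_N$ at scales $L_{k_1} > \cdots > L_{k_N} \geq L_{k_{max}}$ with $N \geq \tfrac12 \max\{k_N, k_0\}$ and $k_0 = \lceil \log^4 n\rceil$. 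Since the fast $R_i$ form a nested subsequence whose smallest scale is at most $k_N$, Lemma~\ref{lem: AB_end} forces fewer than $\tfrac14 \max\{k_N, k_0\}$ of the $R_i$ to be fast, so at least $N_s \geq \tfrac14 \max\{k_N, k_0\} \geq \tfrac14 \lceil \log^4 n\rceil$ of the $R_i$ are slow.

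The heart of the argument is to convert each slow cylinder into a distinct high-weight edge of $\Gamma'$. Since $\Gamma'$ straight-runs across a slow $R_i$ in the long direction, it must cross both of the two $p_{aL_{k_i} 3^{n+3}}$-closed dual paths in $R_i$ (with $a = 9/\sqrt\gamma$); these paths are separated by distance at least $aL_{k_i}$, so this yields two candidate edges $e^{(i)}_1, e^{(i)}_2 \in \Gamma'$ with $|e^{(i)}_1 - e^{(i)}_2| \geq aL_{k_i}$ and weight at least $F^{-1}(p_{aL_{k_i} 3^{n+3}})$. I would then iterate through the slow $R_i$ from innermost outward, picking one edge $e^{(i)}$ per slow cylinder as follows: all previously chosen edges lie in the smaller cylinder $R_{i+1}$, whose diameter is at most $L_{k_{i+1}} \leq L_{k_i}/\gamma$, whereas the two candidates in $R_i$ are separated by at least $(9/\sqrt\gamma) L_{k_i}$. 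For $\gamma$ large the latter exceeds the former, so at most one candidate can lie in $R_{i+1}$, and I pick the other. The resulting edges $e^{(i)}$ are therefore all distinct.

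To close the argument, observe that $F^{-1}(p_{aL_{k_i} 3^{n+3}}) \geq F^{-1}(q_n)$ as soon as $aL_{k_i} 3^{n+3} \leq \lfloor 3^n/(\mathbf{C}\log n)\rfloor$, which rearranges to $k_i \geq C_1 \log\log n$ for a constant $C_1 = C_1(\gamma, \mathbf{C})$. At most $\lceil C_1 \log\log n \rceil$ of the slow cylinders fail this, so at least $\tfrac18 \log^4 n$ of the distinct edges $e^{(i)}$ each contribute at least $F^{-1}(q_n)$ to the passage time of $\Gamma'$. Hence $\sum_{e \in \Gamma'} t_e \geq \tfrac18 F^{-1}(q_n) \log^4 n$, which for large $n$ exceeds the bound $T_{max}(n) \leq C F^{-1}(q_n) \log^3 n$. (The $T_{max}(n)$ bound applies because $\Gamma'$ sits inside the $Ann_{n+1}$ portion of a $T^{(n)}$-geodesic from some point of $\partial B(3^n)$ to $y$, obtained by truncating $\Gamma$ at its last exit from $B(3^n)$.) This contradiction forces $\Gamma'$ to have $(\gamma, k_0)$-sparse straight runs.

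The principal obstacle is the edge-distinctness step: \emph{a priori}, a single edge of $\Gamma'$ could simultaneously realize dual-path crossings at many nested scales, and summing per-scale lower bounds would then double-count. The remedy rests on the quantitative gap between the within-cylinder separation $\asymp L/\sqrt\gamma$ and the diameter $\leq L/\gamma$ of the next inner cylinder, which becomes a usable gap only once $\gamma$ is chosen sufficiently large --- the same largeness condition already imposed in Lemma~\ref{lem: AB_end} and Proposition~\ref{prop: tree}. A minor technical point is that the coarsest $O(\log\log n)$ scales yield $F^{-1}$-values too small to meet the threshold $F^{-1}(q_n)$ and must be discarded, but this loss is negligible next to the $\Theta(\log^4 n)$ total count of slow cylinders.
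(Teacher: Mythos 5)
Your proposal is correct and follows essentially the same route as the paper's proof: contradiction via Lemma~\ref{lem: AB_end} to produce $\Theta(\log^4 n)$ slow cylinders, extraction of distinct geodesic edges of weight at least $F^{-1}(q_n)$ using the separation of the two dual paths versus the diameter of the next nested cylinder, and a contradiction with the $CF^{-1}(q_n)\log^3 n$ bound of Proposition~\ref{prop: upper_bound}. The only cosmetic difference is that you discard the $O(\log\log n)$ coarsest slow scales, whereas the paper keeps only the $\lceil N/4\rceil$ finest-scale slow cylinders (whose indices automatically satisfy $\hat k_i \geq \lfloor N/4\rfloor$); both yield the same count.
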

\begin{proof}
Take $\gamma>1$ large enough so that the event (call it $E_n$) in Lemma~\ref{lem: AB_end} holds almost surely for all large $n$. Take $\omega \in E_n$, $k_0 = \lceil \log^4 n \rceil$, and suppose for the sake of contradiction that $R_1, \ldots, R_N$ is a nested collection of cylinders at scales $L_{k_1} > \cdots > L_{k_N}$ with $k_N \leq k_{max}$ and $N \geq \frac{1}{2} \max\{k_N,k_0\}$ for which a $\Gamma'$ has straight runs in each of the $R_i$'s. Suppose that some $\lceil N/2 \rceil$ of these cylinders are fast and label them in order of decreasing scales as $R_{j_1}, \ldots, R_{j_{\lceil N/2 \rceil}}$. Then $j_{\lceil N/2 \rceil} \leq k_N \leq k_{max}$ and
\[
N/2 \geq \frac{1}{4} \max\{k_N,k_0\} \geq \frac{1}{4} \max\{j_{\lceil N/2 \rceil}, k_0\},
\]
contradicting $\omega \in E_n$. Thus at least $\lceil N/2 \rceil$ of these cylinders are slow. Let $\hat R_1, \ldots, \hat R_{\lceil N/4 \rceil}$ be the $\lceil N/4 \rceil$ slow cylinders at the smallest scales in the sequence $R_1, \ldots, R_N$. In each slow cylinder, there are two dual paths as in the definition of slow. If $\gamma$ is large enough, then each cylinder $\hat R_{i+1}$ can intersect at most one of the two closed dual paths in $\hat R_i$. Therefore, as $\Gamma'$ crosses each of these cylinders, it must intersect a distinct edge from at least one dual path in each $\hat R_i$. This means that if $e_1, \ldots, e_{N/4}$ are such edges, then $T(\Gamma') \geq t_{e_1} + \cdots + t_{e_{\lceil N/4 \rceil}}$. If the cylinder $\hat R_i$ is at scale $L_{\hat k_i}$, then $\hat k_i \geq \lfloor N/4 \rfloor$, so because $\hat R_i$ is slow,
\[
t_{e_i} \geq F^{-1}\left( p_{(9/\sqrt{\gamma})L_{\hat k_i} 3^{n+3}}\right) \geq F^{-1}\left(p_{(9/\sqrt{\gamma})L_{\lfloor N/4 \rfloor}3^{n+3}}\right).
\]
As $N \geq k_0/2 \geq  \frac{1}{2}\lceil \log^4 n \rceil$, one has for large $n$
\[
\frac{9}{\sqrt{\gamma}} L_{\lfloor N/4 \rfloor} 3^{n+3} \leq \left\lfloor \frac{3^n}{\mathbf{C} \log n} \right\rfloor,
\]
where $\mathbf{C}$ is from Proposition~\ref{prop: upper_bound}. Therefore
\begin{equation}\label{eq: q_n_lower}
T(\Gamma') \geq \lceil N/4 \rceil F^{-1}(p_{\lfloor \frac{3^n}{\mathbf{C} \log n} \rfloor}) \geq \frac{1}{8} (\log^4 n) F^{-1}(q_n),
\end{equation}
where $q_n = p_{\lfloor \frac{3^n}{\mathbf{C} \log n} \rfloor}$, and this contradicts Proposition~\ref{prop: upper_bound} for large $n$.
\end{proof}

\subsection{Proof of Theorem~\ref{thm: length_thm}}\label{sec: main_proof}

We prove Theorem~\ref{thm: length_thm} by showing Proposition~\ref{prop: capacity_bound}. As before, we shrink the lattice so that $B(3^{n+3})$ is shrunk to $B(1)$. Choose $\gamma>1$ large enough for Proposition~\ref{prop: straight_runs} and so that if $m = \lfloor \gamma \rfloor$, then $\sqrt{m(m+1)} > \gamma$. This means in particular that if we define $s$ by $\gamma^s = \frac{\sqrt{m(m+1)} + \gamma}{2}$, then $s > 1$.

We now apply Proposition~\ref{prop: straight_runs} along with the capacity lower bound Proposition~\ref{prop: tree}. The first says that if $\Gamma$ is a $T^{(n)}$-geodesic from a vertex $x \in B(1/27)$ to a vertex $y \in \partial B(1)$, then $\Gamma'$ has $(\gamma, \lceil \log^4 n \rceil)$-sparse straight runs down to scale $\ell = L_{k_{max}}$, a number satisfying \eqref{eq: l_k_max}. In this setting, Proposition~\ref{prop: tree} gives for $\epsilon = \gamma - m$, $L_0=1$, and $\beta = \sqrt{m(m+1)}$,
\[
\text{Cap}_{s;\ell} \Gamma' \geq \epsilon^s \left[ \gamma^{s \lceil \log^4 n \rceil} + \frac{\beta}{1-\beta^{-1} \gamma^s} \right]^{-1} \geq C_1 \exp(-C_2 \log^4 n).
\]
This proves Proposition~\ref{prop: capacity_bound}.

Last we combine this bound with equation \eqref{eq: capacity_length_bound}, applied to $\Gamma'$. Putting $\{B_j\}$ as a collection of boxes of size $C_3/3^{n+3}$ centered on the edges of $\Gamma'$ (where $C_3$ is from Proposition~\ref{prop: capacity_bound}), one has $\# \Gamma \geq \frac{C_1}{C_3^s} 3^{ns} \exp\left(-C_2 \log^4 n \right)$, so
\[
\min_{x \in B(3^n), y \in \partial B(3^{n+3})} N_{x,y}^{(n)} \geq  \frac{C_1}{C_3^s} 3^{ns} \exp(-C_2 \log^4 n).
\]
Since $\exp(C_2 \log^4 n) = o(3^{\delta n})$ for each $\delta>0$, we can slightly decrease $s > 1$ to obtain Theorem~\ref{thm: length_thm}.

\section{Block argument: proof of Theorem~\ref{thm: main_thm}}

	


Let $m$ be such that $3^{m-1} \leq \|x\|_\infty < 3^m$ and set $s>1$ as the constant in Theorem~\ref{thm: length_thm}. Fix $s' \in (1,s)$, and let $n=\lceil m \frac{s'}{s}\rceil$, so that $3^{ns}\geq 3^{ms'}$.

\begin{defi}\label{good annulus}
For $y\in \mathbb{Z}^2$, define the annulus $A(y,n):=2 \cdot 3^n y+B(3^{n+3} ) \setminus B(3^n)$. For $z,w \in y+B(3^{n+3})$, define $T_y^{(n)}(z,w)$ as the minimal passage time from $z$ to $w$ among paths remaining in $y+B(3^{n+3})$ and $N_y^{(n)}(z,w)$ the minimal number of edges in any $T_y^{(n)}$-geodesic from $z$ to $w$. Call $A(y,n)$ bad if
\[
\min_{z\in y+\partial B(3^n),w\in y+\partial B(3^{n+3})}N_y^{(n)}(z,w)< 3^{ns}
\]
\end{defi}
By stationarity, $\hat p_n := \mathbb{P}(A(y,n) \text{ is bad})$ depends only on $n$, and by Theorem~\ref{thm: length_thm}, it approaches 0 as $n \to \infty$.


For a geodesic $\Gamma$ from $0$ to $x$, we may follow $\Gamma$, marking each box of the form $y+B(3^n)$ that it touches inside the box $B(3^m)$. Note that if $\|x\|_\infty$ is large enough, $\Gamma$ must cross the annulus $A(y,n)$ surrounding the box. By standard arguments, we can extract a sequence $\gamma = (A_1, \ldots, A_r)$ of these ``crossed'' annuli satisfying the following properties: for universal constants $c_1,c_2 > 0$,
\begin{enumerate}
\item $A_i$ and $A_j$ are disjoint for $i \neq j$,
\item if $|i-j|=1$ and $A_i = A(y_i,n)$, $A_j = A(y_j,n)$, then $\|y_i - y_j\|_\infty \leq c_1$,
\item $\|y_1\|_\infty \leq c_1$, and
\item $r \geq c_2 3^{m-n}$.
\end{enumerate}

Note that if any one of these annuli $A(y,n)$ is not bad, then defining $z$ to be the first entrance of $\Gamma$ to $y+B(3^n)$ and $w$ the last entrance of $\Gamma$ to $y+B(3^{n+3})$ before $z$, then
\[
\# \Gamma \geq N_y^{(n)}(z,w) \geq 3^{ns} \geq 3^{ms'}.
\]
Hence, letting $c_3>0$ be such that, given $y$, there are at most $c_3$ choices of $y'$ with $\|y-y'\|_\infty \leq c_1$, one has



\begin{align*}
\mathbb{P}(N_{0,x} < \|x\|_{\infty}^{s'}) \leq \mathbb{P}(N_{0,x} < 3^{ms'}) &\leq  \sum_{r \geq c_2 3^{m-n}} \sum_{\#\gamma =r}\mathbb{P}(\text{all } A_i \in \gamma \text{ are bad}) \\
&\leq  \sum_{r \geq c_2 3^{m-n}} \sum_{\# \gamma =r} \hat{p}_n^r\\
&\leq \sum_{r \geq c_2 3^{m-n}} (c_3 \hat{p}_n)^r
\end{align*}
By  Theorem~\ref{thm: length_thm}, there exists constant $N>0$ such that when $n\geq N$, $c_3 \hat{p}_n \leq 1/2$. So choosing $\|x\|_\infty$ large enough so that $n \geq N$, we obtain
constants $c_4,c_5>0$ such that
\begin{equation}
\mathbb{P}(N_{0,x} < \|x\|_\infty^{s'})\leq  \sum_{r \geq c_2 3^{m-n}} 2^{-r} \leq c_4e^{-c_5 3^{m-n}}.
\end{equation}
This implies Theorem~\ref{thm: main_thm}, since $3^{m-1} \leq \|x\|_\infty < 3^m$.

\end{document}